\theoremstyle{plain}
\newtheorem{theorem}{Theorem}%[section]
\newtheorem{lemma}[theorem]{Lemma}
\newtheorem{corollary}[theorem]{Corollary}
\newtheorem{observation}[theorem]{Observation}
\theoremstyle{definition}
\newtheorem{example}[theorem]{Example}
\newtheorem{remark}[theorem]{Remark}
\newcommand{\implik}{\Longrightarrow}
\newcommand{\ekviv}{\Longleftrightarrow}
\newcommand{\mnimpl}{\mbox{$
    \mathop{-\raise.8pt\hbox{\mathsurround=0pt$\!\scriptstyle\circ$}}$}}
\newcommand{\uv}[1]{``{#1}"}
\def\zruseno#1{}
\begin{document}

\title[Transition operators assigned to physical systems]%
{Transition operators assigned to physical systems}
\author[Ivan~Chajda   \and Jan Paseka]{Ivan~Chajda*  \and Jan~Paseka**}

\newcommand{\acr}{\newline\indent}

\address{\llap{*\,}Department of Algebra and Geometry\acr
                               Faculty of Science\acr
                               Palack\'y University Olomouc, 17. listopadu 12\acr
                               Olomouc, 771 46\acr
                               CZECH REPUBLIC}
\email{ivan.chajda@upol.cz}

\address{\llap{**\,}Department of Mathematics
					and Statistics\acr
					Faculty of Science\acr
					Masaryk University\acr
					{Kotl\'a\v r{}sk\' a\ 2}\acr
					Brno,611~37\acr
                               CZECH REPUBLIC}
\email{paseka@math.muni.cz}

\thanks{I. Chajda acknowledges the support by a bilateral project I 1923-N25 
New Perspectives on Residuated Posets  financed by  
Austrian Science Fund (FWF) 
and the Czech Science Foundation (GA\v CR). 
J. Paseka gratefully acknowledges Financial Support 
of the Czech Science Foundation (GA\v CR) un\-der the grant 
Algebraic, many-valued and quantum structures for uncertainty modelling 
No.~GA\v CR  15-15286S}

\subjclass{Primary 03B44,  03G25, 06A11, 06B23}
\keywords{Physical system, transition relation, transition operators, states, complete lattice, transition frame.}

\begin{abstract} 
By a physical system we recognize a set of propositions about a given system 
with their truth-values depending on the states of the system.  
Since every  physical system can go from one state in another one, 
there exists a binary relation on the set of states describing this transition.  
Our aim is to assign to every such system an operator on the set of propositions which is fully 
determined by the mentioned relation. 
We establish conditions under which the given relation can be recovered 
by means of this transition operator.
\end{abstract}

\maketitle

%%%%%%%%%%%%%%%%%%%%%%%%%%%%%%%%%%%%%%%%%%%%%%%%%%%%%%%%%%%%%%%%%%%%%%%%%%%%%%%%%%%%%%%%
\section*{Introduction}

In 1900, D.~Hilbert formulated his famous 23 problems, see e.g. \cite{hilbert} for detailes. 
In the problem number 6, he asked: \uv{Can physics be axiomatized?} It means that he asked 
if physics can be formalized and/or axiomatized for to reach a logically perfect system forming 
a basis of precise physical reasoning. This challenge was followed by G.~Birkhoff and 
J.~von~Neuman in 1930´s producing the so-called logic of quantum mechanics. We are 
going to addopt a method and examples of D.~J.~Foulis from \cite{foulis} and 
\cite{foulisexam}, however, we are not restricted to the logic of quantum mechanics. 
We are focused on a general situation with a physical system endowed by states 
which it can reach. Our goal is to assign to every such a system  the so-called transition 
operators completely determining its transition relation. Conditions under which this 
assignment works perfectly will be formulated.

We start with a formalization of a given physical system.
Every physical system is described by certain quantities and states through 
them it goes. From the logical point of view, we can formulate propositions 
saying what a quantity in a given state is. Through the paper we assume that 
these propositions can acquire only two values, namely either TRUE of FALSE. 
It is in accordance with reasoning both in classical physics and 
in quantum mechanics.

Denote by $S$ the set of states of a given physical system ${\mathcal P}$. 
It is given by the nature of  ${\mathcal P}$ from what state $s\in S$ 
the system ${\mathcal P}$ can go to a state  $t\in S$. 
Hence, there  exists a binary relation $R$ on $S$ such that 
$(s, t)\in R$. This process is called a {\em transition} of ${\mathcal P}$.

Besides of the previous, the observer of  ${\mathcal P}$ can formulate 
propositions revealing our knowledge about the system. The 
truth-values of these propositions depend on states.  For example, 
the proposition $p$ can be  true if the system ${\mathcal P}$ is in the state 
$s_1$ but false if ${\mathcal P}$ is in the state $s_2$. Hence, for each 
state $s\in S$ we can evaluate the truth-value of $p$, it is denoted 
by $p(s)$. As mentioned above, $p(s)\in \{0, 1\}$ where 
$0$ indicates the truth-value FALSE and $1$ indicates TRUE. 
The set of all truth-values for all propositions will be called the {\em table}. 

Denote by $B$ the set of propositions about the physical system ${\mathcal P}$ 
formulated by the observer. 
We can introduce the order $\leq$ on $B$ as follows: 
$$
\text{for}\ p,q\in B, p\leq q\ \text{if and only if}\ 
p(s)\leq q(s)\ \text{for all}\ s\in S.
$$
One can immediately check that the contradiction, i.e., the proposition 
with constant truth-value $0$, is the least element and the tautology, i.e., 
the proposition with the constant truth-value $1$ is the greatest element of the 
ordered set $(B;\leq)$; this fact will be expressed by the notation  
$(B;\leq, 0, 1)$ for the bounded ordered set of propositions about ${\mathcal P}$. 

We summarize our description as follows:
\begin{enumerate}[{\rm -}]
\item every physical system ${\mathcal P}$ will be identified with the 
couple $(B,S)$, where $B$ is the set of propositions about ${\mathcal P}$ 
and $S$ is the set of states on ${\mathcal P}$; 
\item the set $S$ is equipped with a binary relation $R$ such that 
${\mathcal P}$ can go from $s_1$ to $s_2$ provided 
$(s_1, s_2)\in R$; 
\item the set $B$ is ordered by values of propositions as shown above.
\end{enumerate}

To shed light on the previous concepts, 
let us addopt an example from \cite{foulisexam}.

\begin{example}\label{firef}\upshape  
%Let us consider the classical example by Foulis (see \cite{foulisexam}). The respective logic of propositions 
%is given in Figure \ref{Fig2v}. 
For system ${\mathcal P}=(B,S)$ we have 
$B=\{0, l, r, n, f, b, l', r', n', f', b', 1\}$ and 
$S=\{s_1, s_2, s_3, s_4, s_5\}$ such that the table is as follows. 

\medskip

\begin{center}
\begin{tabular}{|c|c|c|c|c|c|c|}
\hline
&$0$&$l$& $r$& $n$& $f$& $b$\\ \hline
$s_1$&0&1&0&0&1&0\\ \hline
$s_2$&0&1&0&0&0&1\\ \hline
$s_3$&0&0&1&0&0&1\\ \hline
$s_4$&0&0&1&0&1&0\\ \hline
$s_5$&0&0&0&1&0&0\\ \hline
\end{tabular}\ .
\end{center}

\medskip

Remember that $p'$ is a complement of $p$, i.e., 
it has $0$ in the same instance where $p$ has 
$1$ and vice versa.

Then the order is vizualized in the following Hasse diagram.

 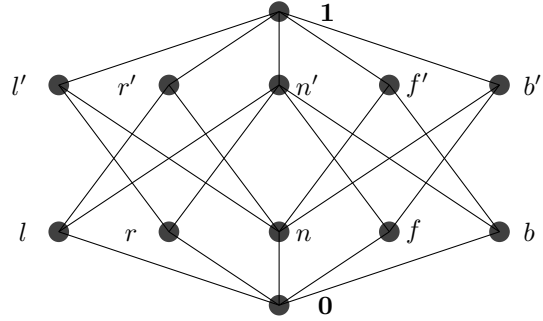
\begin{figure}[h]
\centering
\begin{tikzpicture}[scale=0.976583]
\coordinate [label=right:\phantom{ll}\hbox{$\phantom{ll}{\mathbf 1}$}] (1) at (0,4);
\coordinate [label=right:\phantom{ll}\hbox{$b$}\phantom{lll}] (b) at (3,1);
\coordinate [label=left:\phantom{lll}\hbox{$l$}\phantom{lll}] (l) at (-3,1);
\coordinate [label=left:\phantom{lll}\hbox{$l'$}\phantom{lll}] (l') at (-3,3);
\coordinate [label=left:\phantom{lll}\hbox{$r$}\phantom{lll}] (r) at (-1.5,1);
\coordinate [label=left:\phantom{lll}\hbox{$r'$}\phantom{lll}] (r') at (-1.5,3);
\coordinate [label=right:\phantom{ll}\hbox{$b'$}\phantom{lll}] (b') at (3,3);
%\coordinate [label=right:\phantom{ll}\hbox{$q=q'=G(q)$}] (q) at (-3,-3);
%\coordinate [label=left:\phantom{l}\hbox{$s=t'=G(s)$}\phantom{ll}] (s) at (3,-3);
\coordinate [label=right:\phantom{l}\hbox{$f'$}\phantom{lll}] (f') at (1.5,3);
\coordinate [label=right:\phantom{l}\hbox{$n$}\phantom{lll}] (n) at (0,1);
\coordinate [label=right:\phantom{l}\hbox{$f$}\phantom{lll}] (f) at (1.5,1);
\coordinate [label=right:\phantom{l}\hbox{$n'$}\phantom{lll}] (n') at (0,3);
\coordinate [label=right:\phantom{llll}\hbox{${\mathbf 0}$}] (0) at (0,0);
\draw (0) -- (l); 
\draw (0) -- (r); 
\draw (0) -- (b);
\draw (0) -- (f);
\draw (n') -- (b);
\draw (n') -- (r);
\draw (n') -- (l);
\draw (n') -- (f);
\draw (n') -- (1);
\draw (0) -- (n) ; 
\draw (f') -- (n);
\draw (f') -- (b);
\draw (f') -- (1);
\draw (b') -- (n);
\draw (b') -- (f);
\draw (b') -- (1);
\draw (r') -- (n);
\draw (r') -- (l);
\draw (r') -- (1);
\draw (l') -- (n);
\draw (l') -- (r);
\draw (l') -- (1);
\fill[black,opacity=.75] (0) circle (4pt);
\fill[black,opacity=.75] (1) circle (4pt);
\fill[black,opacity=.75] (l) circle (4pt);
\fill[black,opacity=.75] (r) circle (4pt);
\fill[black,opacity=.75] (f) circle (4pt);
\fill[black,opacity=.75] (n) circle (4pt);
\fill[black,opacity=.75] (n') circle (4pt);
\fill[black,opacity=.75] (b) circle (4pt);
\fill[black,opacity=.75] (f') circle (4pt);
\fill[black,opacity=.75] (b') circle (4pt);
\fill[black,opacity=.75] (r') circle (4pt);
\fill[black,opacity=.75] (l') circle (4pt);
\end{tikzpicture}
\caption{Logic of experimental propositions $\mathbb L$}\label{Fig2v}
%\end{center}
\end{figure}

Finally, the relation $R$ on $S$ is given as follows

$$
R=\{(s_1,s_2), (s_2, s_3), (s_3,s_2), (s_3,s_5),  (s_4,s_3), (s_4,s_5), (s_5,s_5)\}. 
$$

It can be vizualized by the following graph of transition.

\tikzset{
my loop/.style={to path={
.. controls +(80:1) and +(100:1) .. (\tikztotarget) \tikztonodes}},
my state/.style={circle,draw}}

 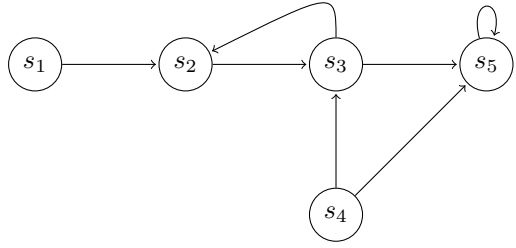
\begin{figure}[h]
\centering
\begin{tikzpicture}[shorten >=1pt,node distance=2cm,auto]
%\draw[help lines] (0,0) grid (3,2);
\node[my state] (s_1)  at (0,0) {$s_1$};
\node[my state] (s_2) [right of=s_1] {$s_2$};
\node[my state] (s_3) [right of=s_2] {$s_3$};
\node[my state] (s_4) [below of=s_3] {$s_4$};
\node[my state] (s_5) [right of=s_3] {$s_5$};
\draw [->] (s_1) -- (s_2);
\draw [->] (s_2) -- (s_3);
\draw [->] (s_3) -- (s_5);
\draw [->] (s_4) -- (s_3);
\draw [->] (s_4) -- (s_5);
\draw[->] (s_3) .. controls +(up:1cm)  ..   (s_2);
%\draw[->] (s_5) .. controls +(up:1cm)  ..   (s_5)
\path (s_5) edge [loop above]   (s_5);
%\path[->] (s_1) edge node  (s_2); 
	%	  (s_2) edge node  (s_3) 
          %     (s_3) edge node  (s_5)
		%  (s_3) edge [loop above] node ()
              % (s_4) edge node  (s_3)
		  %(s_4) edge node  (s_5);
\end{tikzpicture}
\caption{The transition graph of $R$}\label{Fig2tr}
\end{figure}  

\end{example}

Our task is as follows. We introduce an operator $T$ from $B$ into $2^{S}$ which is 
constructed by means of the relation $R$. The question is if this operator, 
called {\em transition operator}, bears all the information about system 
${\mathcal P}$ equipped with  the relation $R$. In other words, if the relation $R$  can be recovered by applying 
the operator $T$. In what follows, we will get conditions under which the 
transition operator has this property. Since these conditions are formulated 
in a pure algebraic way, we need to develop an algebraic background (see e.g. also in \cite{Blyth}) 
which is the purpose of the next section.

It is worth noticing that the transition operators will be constructed formally 
in a similar way as tense operators introduced by J.~Burges \cite{burges} for the classical logic 
and developped by the authors for several non-classical logics, see \cite{dyn}, \cite{dem} and \cite{doa}.

\section{Algebraic tools}

Let $S$ be a non-void set. Every subset $R\subseteq S\times S$ is called 
a {\em relation on $S$} and we say 
that the triple $(S, R)$ is a {\em transition frame}.
The fact that $(x, y)\in R$ for $x, y\in S$  is expressed by the notation $x R y$.

Let $(A;\leq)$ and $(B;\leq)$ be ordered sets, $f, g\colon A\to B$ mappings. 
We write $f\leq g$ if $f(a)\leq g(a)$, for all $a\in A$.
A mapping $f$ is called {\em order-preserving} or 
 {\em monotone} if $a, b \in A$ and $a \leq b$ together 
imply $f(a) \leq f(b)$, {\em antitone} if $a, b \in A$ and 
$a \leq b$ together imply $f(b) \leq f(a)$ 
and {\em order-reflecting}  if 
$a, b \in A$ and $f(a) \leq f(b)$ together imply $a \leq b$. 
A bijective order-preserving and order-reflecting mapping 
$f\colon A\to B$  is called an 
{\em isomorphism} and we say that the ordered sets 
$(A;\leq)$ and $(B;\leq)$ are {\em isomorphic}.

Let $(A;\leq)$ and $(B;\leq)$ be ordered sets. A mapping $f\colon A\to B$ is 
called {\em residuated} if there exists a mapping $g\colon B\to A$ 
such that 

$$f(a)\leq b\quad\text{if and only if}\quad a\leq g(b)$$
for all $a\in A$ and $b\in B$. 

In this situation, we say that $f$  and $g$ form a {\em residuated pair} or that 
the pair $(f,g)$ is a (monotone) {\em Galois connection}. 
The mapping $f$ is called a {\em lower adjoint of $g$} or a
{\em left adjoint of $g$}, the mapping $g$ is called an 
{\em upper adjoint of $f$} or a 
{\em right adjoint of $f$}. 

Galois connections can be described as follows, see e.g. \cite{Blyth}.

\begin{lemma}\label{GalCon}
Let $(A;\leq)$ and $(B;\leq)$ be ordered sets. Let  $f\colon A\to B$  and $g\colon B\to A$ be mappings. The following conditions are equivalent:
\begin{enumerate}
\item $(f,g)$ is a {Galois connection}.
\item $f$ and $g$ are monotone, $\mathop{id}_A\leq g\circ f$ and $ f\circ g\leq \mathop{id}_B$. 
\item $g(b)=\bigvee\{x\in A \mid f(x)\leq b\}$ and $f(a)=\bigwedge\{y\in B\mid a\leq g(y)\}$ 
for all $a\in A$ and $b\in B$.
\end{enumerate}
\end{lemma}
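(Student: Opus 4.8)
The plan is to prove the equivalence of the three conditions by establishing the cycle (1)$\Rightarrow$(2)$\Rightarrow$(3)$\Rightarrow$(1), which is the standard and most economical route for such a characterization. Throughout I will use the defining biconditional $f(a)\leq b \iff a\leq g(b)$ for the Galois connection, feeding it carefully chosen instances.

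First I would handle (1)$\Rightarrow$(2). To show $\mathop{id}_A\leq g\circ f$, fix $a\in A$ and apply the biconditional with $b=f(a)$: since $f(a)\leq f(a)$ trivially holds, we get $a\leq g(f(a))$. Dually, setting $a=g(b)$ and using $g(b)\leq g(b)$ gives $f(g(b))\leq b$, i.e.\ $f\circ g\leq \mathop{id}_B$. For monotonicity of $f$, suppose $a_1\leq a_2$ in $A$; then $a_1\leq a_2\leq g(f(a_2))$ (using the already-proved $\mathop{id}_A\leq g\circ f$), and applying the biconditional backwards to $a_1\leq g(f(a_2))$ yields $f(a_1)\leq f(a_2)$. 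The monotonicity of $g$ is the dual argument using $f\circ g\leq\mathop{id}_B$.

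Next, (2)$\Rightarrow$(3). I want to show $g(b)=\bigvee\{x\in A\mid f(x)\leq b\}$; by symmetry the formula for $f$ is analogous. Denote $X_b=\{x\in A\mid f(x)\leq b\}$. First, $g(b)$ is an upper bound of $X_b$: if $f(x)\leq b$, apply $g$ (monotone) to get $g(f(x))\leq g(b)$, and combine with $x\leq g(f(x))$ from (2) to obtain $x\leq g(b)$. Second, $g(b)$ itself lies in $X_b$, because $f(g(b))\leq b$ by (2); hence $g(b)$ is the \emph{largest} element of $X_b$, so in particular it is the supremum (this sidesteps any worry about existence of arbitrary joins --- the join in question exists precisely because $X_b$ has a greatest element, namely $g(b)$). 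The dual argument gives $f(a)=\bigwedge\{y\in B\mid a\leq g(y)\}$ as the least element of the corresponding set.

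Finally, (3)$\Rightarrow$(1). Assume the two formulas hold; I must recover $f(a)\leq b \iff a\leq g(b)$. If $f(a)\leq b$, then $a$ is a member of $\{x\in A\mid f(x)\leq b\}$, hence $a\leq\bigvee\{x\mid f(x)\leq b\}=g(b)$. Conversely, if $a\leq g(b)$, then $b$ belongs to $\{y\in B\mid a\leq g(y)\}$, so $f(a)=\bigwedge\{y\mid a\leq g(y)\}\leq b$. This closes the cycle. I do not anticipate a genuine obstacle here; the one point requiring a little care is the claim in (2)$\Rightarrow$(3) that the supremum (resp.\ infimum) actually exists in the ordered set $A$ (resp.\ $B$), which is why I phrase it via "greatest element of $X_b$" rather than assuming completeness --- the existence of the join is then automatic and the identity $g(b)=\bigvee X_b$ is literally the statement that $g(b)=\max X_b$.
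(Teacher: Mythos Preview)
Your proof is correct and is the standard textbook argument. The paper itself does not supply a proof of this lemma: it is stated with the remark ``see e.g.\ \cite{Blyth}'' and no argument is given, so there is no in-paper proof to compare your approach against. Your care in (2)$\Rightarrow$(3) to observe that $g(b)=\max X_b$ (so that the join exists without any completeness hypothesis on $A$) is exactly the right way to handle the only potentially delicate point.
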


Moreover, it is possible to compose Galois connections: given Galois connections 
$( f,  g)$ between posets $\mathbf A$ and $\mathbf B$ and 
$(u, v)$ between posets $\mathbf B$ and $\mathbf C$, the pair 
$(u\circ  f,  g\circ  v)$ is also a Galois connection  between posets $\mathbf A$ and $\mathbf C$.

If an ordered set $\mathbf A$ has 
both a bottom and a top element, it will be called {\em bounded}; the appropriate 
notation for a bounded ordered set is $(A;\leq,0,1)$.  
Let $(A;\leq,0,1)$ and $(B;\leq,0,1)$ be bounded posets. A {\em morphism} 
$f\colon A\to B$ {\em of bounded  posets} 
%(with a negation)
is an order, %(negation), 
top element and bottom element preserving map.

We can take the following useful result from \cite{dyn}.

\begin{observation}[\cite{dyn}]\label{obsik} Let $\mathbf A$ and $\mathbf M$  be 
bounded posets, $S$ a set and 
$h_{s}\colon A\to M, s\in S$, morphisms of bounded posets.
The following conditions are equivalent:
\begin{enumerate}
\item[{\rm(i)}] \(((\forall s \in S)\, h_{s}(a)\leq h_{s}(b))\implies a\leq
b\) for any elements \(a,b\in A\);
\item[{\rm(ii)}] The map $h\colon A \to M^{S}$ defined by 
$h(a)=(h_s(a))_{s\in T}$ for all $a\in A$ is order reflecting.
\end{enumerate}
\end{observation}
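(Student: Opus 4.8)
The plan is to unwind the definitions on both sides and observe that the two statements say literally the same thing once the order on the product poset $M^{S}$ is made explicit.

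First I would equip $M^{S}$ with the componentwise order inherited from $\mathbf M$, i.e.\ for $x=(x_{s})_{s\in S}$ and $y=(y_{s})_{s\in S}$ in $M^{S}$ we set $x\leq y$ if and only if $x_{s}\leq y_{s}$ for all $s\in S$. With this order $M^{S}$ is again a bounded poset, with bottom $(0)_{s\in S}$ and top $(1)_{s\in S}$; boundedness is not actually needed for the present claim, and neither is monotonicity of the $h_{s}$ — only that each $h_{s}$ is a map $A\to M$, so that $h(a)=(h_{s}(a))_{s\in S}$ is a well-defined element of $M^{S}$.

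Next I would recall that, by definition, the map $h\colon A\to M^{S}$ is order-reflecting precisely when, for all $a,b\in A$, the inequality $h(a)\leq h(b)$ in $M^{S}$ implies $a\leq b$. By the description of the componentwise order just recalled, and since the $s$-th component of $h(a)$ is exactly $h_{s}(a)$, we have that $h(a)\leq h(b)$ holds in $M^{S}$ if and only if $h_{s}(a)\leq h_{s}(b)$ for every $s\in S$. Substituting this equivalence into the definition of order-reflecting turns condition (ii) verbatim into condition (i), which establishes both implications at once.

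The only point that requires any care is to make explicit that $M^{S}$ is being given the pointwise order, rather than being regarded merely as a set of functions, since the entire content of the statement is the translation between \uv{holds for every $s\in S$} and \uv{holds in the product poset $M^{S}$}. Once that convention is fixed there is no genuine obstacle: the proof is a one-line consequence of unfolding the two definitions.
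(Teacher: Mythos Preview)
Your proposal is correct. The paper does not supply its own proof of this observation---it is stated without proof, as a result taken from \cite{dyn}---so there is nothing to compare against; your argument is precisely the intended one, namely that condition (i) is nothing but the componentwise unpacking of the statement $h(a)\leq h(b)\implies a\leq b$ in the product order on $M^{S}$.
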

We then say that $\{h_{s}\colon A\to M; s\in S\}$ is a 
{\em full set of order-preserving maps with respect to} $M$. 
Note that we may in this case 
identify $\mathbf A$ with a bounded subposet   of  $\mathbf{M}^S$ since $h$ is an order reflecting 
morphism alias {\em embedding} of bounded posets. For any $s\in S$ and any 
$m=(m_t)_{t\in S}\in {M}^S$ we denote by $s(m)$ the $s$-th projection $m_s$. 
Note that $s(h(a))=h_s(a)$ for all $a\in A$ and all $s\in S$.

Consider  a complete lattice  $\mathbf M=(M;\leq,{}0, 1)$ and  
let  $(S,R)$ be a transition  frame.  
Further, let ${\mathbf{A}}=(A;\leq,{}0, 1)$  and 
${\mathbf{B}}=(B;\leq,{}0, 1)$ be  a 
bounded subposets of \/ $\mathbf M^{S}$.

Define 
mappings $P_R:A\to {M}^S$ and $T_R:B\to {M}^S$
  as follows: For all $b\in B$ and all $s\in S$,  
  
%$$
\begin{equation}\begin{array}{c}\mbox{$T_R(b)(s)=\bigwedge_{M}\{t(b)\mid s R t\} $}\phantom{.} \tag{$\ast$}
 \end{array}
\label{eqn:RTD}
\end{equation}
%$$ 
\noindent{}and, for all $a\in A$ and all $t\in S$,  

% $$
\begin{equation}
\begin{array}{c}
\mbox{${P}_R(a)(t)=\bigvee_{M}\{s(a)\mid s R  t\} $}{.} \tag{$\ast\ast$}
 \end{array}
\label{eqn:RPD}
\end{equation}
%$$

Then we say that ${T}_R$ ($P_R$) is an {\em upper transition operator} 
({\em lower transition operator})   {\em constructed by means of the  transition frame}  $(S,R)$, 
respectively.

For to answer our question given in introduction, we introduce certain binary relations 
induced by means of operators mentioned above.  Hence, let 
 $\mathbf{A}=({A};\leq, 0,1)$ and 
$\mathbf{B}=({B};$ $\leq, 0,1)$  be  bounded posets 
 with a full set $S$  of morphisms  of bounded 
 posets into a  non-trivial  complete lattice   
$\mathbf{M}$. %In what follows we will 
We may assume that 
$\mathbf{A}$ 
and   $\mathbf{B}$ are bounded subposets of\/ $\mathbf{M}^{S}$.

Let $P:A\to B$ and $T:B\to A$ be morphisms of  posets.
Let us define the relations %$$
\begin{equation}R_T=\{(s, t)\in S\times S\mid (\forall b\in B) (s(T(b))\leq t(b))\} \tag{$\dagger$}
\label{eqn:RT}
\end{equation}
%$$
and 
%$$
\begin{equation}
R^{P}=\{(s, t)\in S\times S\mid (\forall a\in A) (s(a)\leq t(P(a)))\}.\tag{$\dagger\dagger$}
\label{eqn:RP}
\end{equation}
%$$

\bigskip

The relations $R_T$ and  $R^{P}$ 
on $S$  %introduced before Proposition \ref{reftrans} 
will be called the 
{\em upper $T$-induced relation by ${\mathbf M}$} 
(shortly {\em $T$-induced relation by ${\mathbf M}$}) and 
{\em lower $P$-induced relation  by ${\mathbf M}$} 
(shortly {\em $P$-induced relation by ${\mathbf M}$}),
respectively.

Let $P\colon A\to {M}^{S}$ and $T\colon B\to {M}^{S}$ be morphisms of  posets, 
$R_T$ the $T$-induced relation by ${\mathbf M}$ and  
$R^P$ the $P$-induced relation by ${\mathbf M}$. Let us denote by ($\star$) 
the following condition: 

\begin{tabular}{@{}p{0.89\textwidth}@{\,}c}
& \\
\begin{minipage}{0.87\textwidth}
\begin{enumerate}
\item  for all $b\in B$ and for all $s\in S$, 
$s(T(b))=\bigwedge_{M}\{t(b)\mid  s R_G t\}$,
\item  for all $a\in A$ and for all $t\in S$, 
$t(P(a))=\bigvee_{M}\{s(a)\mid  s R^{P} t\}$.
\end{enumerate}
\end{minipage}&($\star$)\label{cstar}
\end{tabular}

\medskip

Now, let let $(S, R)$ be a transition frame and $T_R$, $P_R$  operators 
constructed by means of the  transition frame  $(S,R)$. We can ask under what conditions 
the relation $R$ coincides with the relation $R_{T_R}$ constructed as in 
($\dagger$)  or with the relation $R^{P_R}$ constructed as in 
($\dagger\dagger$). 
If this is the case we say that $R$ {\em is recoverable from} $T_R$ or  that 
$R$ {\em is recoverable from} $P_R$. We say that 
$R$ is {\em recoverable} if it is recoverable both from $T_R$  
and  $P_R$. The answer will be given in the next section. 

\section{The transition operator characterizing the physical system}

The connection between the relations $R$ and  $R_{T_R}$  or 
 $R$ and  $R^{P_R}$ is presented in the following lemma.

\begin{lemma}\label{reldgalreprest}
Let  $\mathbf{M}$ be a  non-trivial  complete lattice and $S$ a non-empty set. 
Let $\mathbf{A}$  and 
$\mathbf{B}$ be bounded subposets of\/ $\mathbf{M}^{S}$.  
Then 
\begin{enumerate}[\rm(a)] 
\item $R_1\subseteq R_2\subseteq S\times S $ implies $T_{R_2}\leq T_{R_1}$ 
and $P_{R_1}\leq P_{R_2}$. 
\item If $T_1, T_2:B\to M^{S}$ are order-preserving mappings such that 
$T_2\leq T_1$ and $T_2(1)=T_1(1)=1$ then 
$R_{T_1}\subseteq  R_{T_2}$.
\item If $P_1, P_2:A\to M^{S}$ are order-preserving mappings such that 
$P_1\leq P_2$ and $P_1(0)=P_2(0)=0$ then 
$R^{P_1}\subseteq  R^{P_2}$.
\item If $R\subseteq S\times S$ then $R\subseteq R_{T_R}\cap R^{P_R}$.  
\item  If $T:B\to M^{S}$ is an order-preserving mapping such that $T(1)=1$ then 
$T\leq T_{R_G}$. 
\item  If $P:A\to M^{S}$ is an order-preserving mapping such that $P(0)=0$ then 
$P_{R^{P}}\leq P$. 
\item Both the extremal relations $S\times S$ and $\emptyset$ are recoverable from $T_{S\times S}$ and $T_{\emptyset}$ 
or $P_{S\times S}$ and $P_{\emptyset}$, respectively.
\end{enumerate}

\end{lemma}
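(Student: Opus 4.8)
The plan is to prove the seven parts essentially in the order given, since each later part leans on the earlier ones and on the characterisation of Galois connections in Lemma~\ref{GalCon} together with the defining formulas ($\ast$), ($\ast\ast$), ($\dagger$), ($\dagger\dagger$).

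\medskip

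For (a), I would argue directly from ($\ast$) and ($\ast\ast$): if $R_1\subseteq R_2$, then for a fixed $s$ the set $\{t(b)\mid sR_2t\}$ contains $\{t(b)\mid sR_1t\}$, and the meet over a larger set is smaller, so $T_{R_2}(b)(s)\le T_{R_1}(b)(s)$; dually the join over a larger set is larger, giving $P_{R_1}(a)(t)\le P_{R_2}(a)(t)$. For (b), suppose $(s,t)\in R_{T_1}$, i.e.\ $s(T_1(b))\le t(b)$ for all $b\in B$; since $T_2\le T_1$ we get $s(T_2(b))\le s(T_1(b))\le t(b)$, hence $(s,t)\in R_{T_2}$. (The hypotheses $T_2(1)=T_1(1)=1$ are presumably needed only to make $R_{T_1},R_{T_2}$ behave as genuine ``transition-type'' relations, or are simply carried along for uniformity with the recoverability results; I would note this but the containment itself only uses $T_2\le T_1$.) Part (c) is the exact order-dual of (b), replacing meets by joins, $T$ by $P$, $1$ by $0$, and reversing the roles of $s$ and $t$. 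For (d): fix $R$ and $(s,t)\in R$. Then by ($\ast$), $s(T_R(b))=\bigwedge\{u(b)\mid sRu\}\le t(b)$ because $t$ is one of the $u$'s; hence $(s,t)\in R_{T_R}$. Dually, by ($\ast\ast$), $t(P_R(a))=\bigvee\{u(a)\mid uRt\}\ge s(a)$, so $(s,t)\in R^{P_R}$; thus $R\subseteq R_{T_R}\cap R^{P_R}$.

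\medskip

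For (e), take an order-preserving $T\colon B\to M^S$ with $T(1)=1$, and fix $b\in B$, $s\in S$. I must show $s(T(b))\le T_{R_{T}}(b)(s)=\bigwedge\{t(b)\mid sR_{T}t\}$, i.e.\ that $s(T(b))\le t(b)$ for every $t$ with $sR_{T}t$. But $sR_{T}t$ means precisely $s(T(b'))\le t(b')$ for all $b'\in B$, in particular for $b'=b$; so $s(T(b))\le t(b)$ and the inequality holds after taking the meet over all such $t$. (The role of $T(1)=1$ here is to guarantee that for each $s$ there is at least one $t$ with $sR_Tt$ — namely, one can check $sR_Ts'$ is nonempty using the structure of $M^S$ — or, more simply, the empty meet in $M$ equals $1\ge s(T(b))$ only if... in fact the empty meet is $1$ so the inequality $s(T(b))\le 1$ is automatic; I would spell this out carefully. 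The subtlety is that the displayed condition in the lemma writes $R_{R_G}$ but evidently means $R_{T}$, so I would read it as the intended statement $T\le T_{R_T}$.) Part (f) is again the order-dual: for order-preserving $P$ with $P(0)=0$ and fixed $a,t$, one shows $P_{R^{P}}(a)(t)=\bigvee\{s(a)\mid sR^{P}t\}\le t(P(a))$, because each such $s$ satisfies $s(a)\le t(P(a))$ by definition of $R^{P}$, and the empty join in $M$ is $0\le t(P(a))$.

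\medskip

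For (g), I would use (d) to get $S\times S\subseteq R_{T_{S\times S}}$ and $\emptyset\subseteq R_{T_\emptyset}$ for free, so only the reverse inclusions remain. The inclusion $R_{T_{S\times S}}\subseteq S\times S$ is trivial since $S\times S$ is the largest relation on $S$. For $R_{T_\emptyset}\subseteq\emptyset$, compute $T_\emptyset(b)(s)=\bigwedge_M\emptyset=1$ for all $b,s$; then $(s,t)\in R_{T_\emptyset}$ would force $1=s(T_\emptyset(b))\le t(b)$ for all $b\in B$, in particular for $b=0$, giving $1\le t(0)=0$, contradicting non-triviality of $\mathbf M$; hence $R_{T_\emptyset}=\emptyset$. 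Dually $P_{S\times S}(a)(t)=\bigvee_M\{s(a)\mid s\in S\}$ and $P_\emptyset(a)(t)=\bigvee_M\emptyset=0$, and one checks $R^{P_{S\times S}}=S\times S$, $R^{P_\emptyset}=\emptyset$ by the same kind of argument (using $a=1$ and $t(1)=1>0$ for the empty case). Combining, both $S\times S$ and $\emptyset$ are recoverable from the corresponding $T$'s and $P$'s.

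\medskip

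The main obstacle I anticipate is purely bookkeeping: making sure the ``empty meet/join'' corner cases are handled (this is exactly where the hypotheses $T(1)=1$, $P(0)=0$, and the non-triviality of $\mathbf M$ enter, and where the typo $R_G$/$R_{R_G}$ in the statement must be reinterpreted as $R_T$), and being careful that $s(\cdot)$ denotes the $s$-th projection so that identities like $s(T_R(b))=\bigwedge\{t(b)\mid sRt\}$ are literally the definitions. None of the seven parts requires a genuinely new idea beyond ``meet/join over a sub-index-set is $\ge$/$\le$'' plus unfolding ($\dagger$), ($\dagger\dagger$); the content is in (d)--(f), and (g) is then an easy consequence.
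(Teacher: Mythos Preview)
Your proposal is correct and follows essentially the same route as the paper's own proof: each part is handled by directly unfolding the definitions ($\ast$), ($\ast\ast$), ($\dagger$), ($\dagger\dagger$), with (e)--(f) splitting into the empty versus non-empty index-set cases and (g) using (d) together with the test elements $b=0$ (respectively $a=1$) to rule out any pair in $R_{T_\emptyset}$ (respectively $R^{P_\emptyset}$). Your reading of the typo $T_{R_G}$ as $T_{R_T}$ is the intended one, and your side remarks about which hypotheses are actually used (e.g.\ that $T_2\le T_1$ alone suffices for (b), and that the empty-meet case in (e) makes $T(1)=1$ inessential for the inequality itself) are accurate observations that the paper leaves implicit.
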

\begin{proof} (a): Assume that $R_1\subseteq R_2\subseteq S\times S $ and $s\in S$. Then 
$\{p(t)\mid s R_1 t\} \subseteq \{p(t)\mid s R_2 t\} $. It follows that 
$$\begin{array}{c}\mbox{$\bigwedge_{M}\{p(t)\mid s R_2 t\} $}\leq %
\mbox{$\bigwedge_{M}\{p(t)\mid s R_1 t\}$}.
\end{array}
$$
This yields that  $T_{R_2}\leq T_{R_1}$ and similarly $P_{R_1}\leq P_{R_2}$.

\noindent{}(b): Let $(s,t)\in R_{T_1}$. Then, for all $b\in B$, 
$s(T_2(b)))\leq s(T_1(b)))\leq t(b)$, i.e.,  $(s,t)\in R_{T_2}$.

\noindent{}(c): Let $(s,t)\in R^{P_1}$. Then, for all $a\in A$,  
$s(a)\leq t(P_1(a))\leq t(P_2(a))$, i.e., $(s,t)\in R^{P_2}$.

\noindent{}(d): Let $(s,t)\in R$. Then, for all $b\in B$, 
$s(T_R(b))=\bigwedge_{M}\{p(t)\mid s R t\} \leq t(b)$.  It follows that 
$(s,t)\in R_{T_R}$. Similarly, $(s,t)\in R^{P_R}$. 

\noindent{}(e): Let $b\in B$ and $s\in S$. Either the set 
$\{t\in S \mid (s,t)\in R_T\}=\emptyset$ in which case 
$s(T_{R_T}(b))=1$ and hence $s(T(b))\leq s(T_{R_T}(b))$ or 
$\{t\in S \mid (s,t)\in R_T\}\not=\emptyset$ in which case, for all 
$t\in \{t\in S \mid (s,t)\in R_T\}$, we have that $s(T(b))\leq t(b)$, 
i.e.,  $s(T(b))\leq s(T_{R_T}(b))$.

\noindent{}(f):  It follows by the same considerations as (e).

\noindent{}(g):   Clearly, from (d) we have 
$S\times S\subseteq R_{T_{S\times S}}\subseteq S\times S$, 
i.e., $R_{T_{S\times S}}=S\times S$. 
Since $T_{\emptyset}(b)=1$ for all $b\in B$, i.e., also 
$s(T_{\emptyset}(0))=1 >0= t(0)$ for all $s, t\in S$  it follows 
that $R_{T_{\emptyset}}=\emptyset$. 
Similarly, $R^{P_{S\times S}}=S\times S$ and 
 $R^{P_{\emptyset}}=\emptyset$. 
\end{proof}

In what follows, we are going to show that the transition
relations on $S$ and the transition operators on $\mathbf B$ form 
a Galois connection. This is important because in every 
Galois connection one of its components fully determines the second one and vice versa.

Let  $\mathbf{M}$ be a  non-trivial  complete lattice and $S$ a non-empty set. 
Let $\mathbf{B}$ be a bounded subposet of $\mathbf{M}^{S}$, 
$(\wp(S\times S);\subseteq, \emptyset, S\times S)$ be 
the  poset of all relations on $S$ 
 and $(\text{Map}_1(\mathbf{B}, \mathbf{M}^{S}); \sqsubseteq)$ 
be the poset of all order-preserving mappings 
$T\colon B\to M^{S}$ such that  $T(1)=1$ and $T_1\sqsubseteq T_2$ if and only if 
$T_2(b)\leq T_1(b)$ for all $b\in B$.  The smallest element of 
$(\text{Map}_1(\mathbf{B}, \mathbf{M}^{S}); \sqsubseteq)$ 
is the constant mapping  ${\mathbf 1}$ such that  ${\mathbf 1}(b)=(1)_{s\in S}$ for all $b\in B$.
Let us put, for all $R\in \wp(S\times S)$ and all  $T\in \text{Map}_1(\mathbf{B}, \mathbf{M}^{S})$, 
$\varphi(R)=T_R$ and $\psi(T)=R_T$.

\begin{theorem}\label{reldgalconnreprest}
Let  $\mathbf{M}$ be a  non-trivial  complete lattice and $S$ a non-empty set 
such that  $\mathbf{B}$ is a bounded subposet of $\mathbf{M}^{S}$. Then the couple 
$(\varphi, \psi)$ is a Galois connection between $(\wp(S\times S);$ $\subseteq, \emptyset, S\times S)$  
and $(\text{\upshape Map}_1(\mathbf{B}, \mathbf{M}^{S}); \sqsubseteq)$.
\end{theorem}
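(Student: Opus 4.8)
The plan is to verify condition (2) of Lemma~\ref{GalCon}, namely that $\varphi$ and $\psi$ are monotone (with respect to the orders $\subseteq$ on $\wp(S\times S)$ and $\sqsubseteq$ on $\text{Map}_1(\mathbf{B},\mathbf{M}^S)$), that $\mathop{id}\sqsubseteq \psi\circ\varphi$, and that $\varphi\circ\psi\subseteq \mathop{id}$. Before that, I must check that $\varphi$ and $\psi$ are well-defined as maps between the stated posets: for $\varphi$, I need $T_R$ to be an order-preserving mapping $B\to M^S$ with $T_R(1)=1$, which follows since $1$ in $\mathbf{M}^S$ is the constant tuple $(1)_{s\in S}$ and $T_R(1)(s)=\bigwedge_M\{t(1)\mid sRt\}=\bigwedge_M\{1\mid sRt\}=1$ (the empty meet in $\mathbf M$ being $1$), and monotonicity of $T_R$ is immediate from the definition $(\ast)$ since $b\leq b'$ gives $t(b)\leq t(b')$ pointwise; for $\psi$, I need $R_T\in\wp(S\times S)$, which is trivial.

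Next I would establish the two monotonicity statements. Monotonicity of $\varphi$ with respect to the displayed orders is exactly part (a) of Lemma~\ref{reldgalreprest}: $R_1\subseteq R_2$ implies $T_{R_2}\leq T_{R_1}$, i.e.\ $T_{R_1}\sqsubseteq T_{R_2}$, i.e.\ $\varphi(R_1)\sqsubseteq\varphi(R_2)$. Monotonicity of $\psi$ is part (b): if $T_1\sqsubseteq T_2$, meaning $T_2\leq T_1$, then (using also $T_1(1)=T_2(1)=1$) we get $R_{T_1}\subseteq R_{T_2}$, i.e.\ $\psi(T_1)\subseteq\psi(T_2)$. So both maps are order-preserving essentially for free from the earlier lemma.

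For the unit inequality $\mathop{id}\sqsubseteq \psi\circ\varphi$ I must show, for every $R$, that $R\subseteq R_{T_R}$; this is precisely the first half of part (d) of Lemma~\ref{reldgalreprest}. For the counit inequality $\varphi\circ\psi\subseteq\mathop{id}$ I must show, for every $T\in\text{Map}_1(\mathbf B,\mathbf M^S)$, that $T_{R_T}\sqsubseteq T$, i.e.\ $T\leq T_{R_T}$; this is exactly part (e) of Lemma~\ref{reldgalreprest} (the statement there is written $T\leq T_{R_G}$, but $R_G$ there is $R_T$ in the present notation, and the hypothesis $T(1)=1$ is built into membership in $\text{Map}_1$). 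Assembling these four facts and invoking the equivalence (1)$\Leftrightarrow$(2) of Lemma~\ref{GalCon} gives that $(\varphi,\psi)$ is a Galois connection, which is the claim.

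The only genuine subtlety — and thus the main thing to be careful about — is keeping the direction of the order $\sqsubseteq$ straight, since it is the \emph{reverse} of the pointwise order $\leq$ on $\mathbf M^S$; a Galois connection with respect to $\sqsubseteq$ corresponds to an \emph{antitone} pairing with respect to $\leq$, and parts (a), (b), (d), (e) of Lemma~\ref{reldgalreprest} have been formulated with exactly the variances that make this work. I do not anticipate any hard analytic step; everything reduces to bookkeeping with empty meets in $\mathbf M$ (ensuring $T_R(1)=1$ and the well-definedness of $\varphi$) and to citing the already-proved monotonicity and adjunction-flavoured inequalities of Lemma~\ref{reldgalreprest}.
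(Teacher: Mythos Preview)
Your argument is correct. You verify condition~(2) of Lemma~\ref{GalCon} (monotonicity of $\varphi$ and $\psi$ plus the unit and counit inequalities), whereas the paper's proof checks condition~(1) directly, i.e.\ it unpacks the biconditional $\varphi(R)\sqsubseteq T \Longleftrightarrow R\subseteq\psi(T)$ by hand. Both proofs invoke Lemma~\ref{reldgalreprest}(a),(b) for monotonicity; the difference is that you then simply cite parts~(d) and~(e) of the same lemma for the unit and counit, while the paper essentially re-derives the content of~(d) and~(e) inside the adjunction verification (splitting on whether $\{t\mid sRt\}$ is empty). Your route is thus a bit more economical given the lemmas already in place; the paper's route is more self-contained but slightly redundant. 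One cosmetic point: you write the unit as $\mathop{id}\sqsubseteq\psi\circ\varphi$ and the counit as $\varphi\circ\psi\subseteq\mathop{id}$, but the orders are swapped --- the unit lives in $(\wp(S\times S),\subseteq)$ and the counit in $(\text{Map}_1,\sqsubseteq)$ --- though you spell out the correct content immediately afterward.
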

\begin{proof} From Lemma \ref{reldgalreprest} (a), (b) we know that $\psi$ and $\varphi$ are 
order-preserving mappings. It is enough to check that, 
for all $R\in \wp(S\times S)$ and all  $T\in \text{Map}_1(\mathbf{B}, \mathbf{M}^{S})$, 
$$
\varphi(R)\sqsubseteq T \ \text{if and only if} \ 
R\subseteq \psi(T).
$$

Assume first that $\varphi(R)\sqsubseteq T$ holds and let $(s,t)\in R$. 
Then, for all $b\in B$, we have $s(T(b))\leq s(\varphi(R)(b))= s(T_R(b))%
=\bigwedge_{M}\{t(b)\mid s R t\}\leq t(b)$. This yields 
that $(s,t)\in \psi(G)=R_T$.

Conversely, assume that $R\subseteq \psi(T)$ and let $b\in B$, $s\in S$. 
 Either the set $\{t\in S \mid (s,t)\in R\}=\emptyset$ in which case 
$s(T_{R}(b))=1$ which yields $s(T(b))\leq 1=s(\varphi(R)(b))$ or 
$\{t\in S \mid (s,t)\in R\}\not=\emptyset$. In the last case we have that  
$\{t(b)\in T \mid (s,t)\in R\}\not=\emptyset$ and by the definition of 
$\varphi(R)=T_R$ we have that 
$s(T_R(b))=\bigwedge_{M}\{t(b)\mid s R t\}\leq t(b)$ for all 
$t\in S$ such that $(s,t)\in R$. Since  $R\subseteq \psi(T)$ 
we have, for all $t\in S$ such that $(s,t)\in R$, that, for all $c\in B$, 
$s(T(c))\leq t(c)$. It follows that $s(T(c))\leq s(T_R(b))=s(\varphi(R)(b))$. But we have just 
proved that $\varphi(R)\sqsubseteq T$.
\end{proof}

\begin{remark}
We point out  that our recoverable relations from the respective 
upper transition operators are exactly fixpoints of the composition 
$\psi\circ \varphi\colon \wp(S\times S) \to \wp(S\times S)$.
\end{remark}

Dually, let  $\mathbf{M}$ be a  non-trivial  complete lattice and $S$ 
a non-empty set. 
Let $\mathbf{A}$ be a bounded subposet of $\mathbf{M}^{S}$, 
$(\wp(S\times S);\subseteq, \emptyset, S\times S)$ be 
the  poset of all relations on $S$  and 
$(\text{Map}_0(\mathbf{A}, \mathbf{M}^{S}); \leq)$ be the poset of all order-preserving mappings 
$P\colon A\to M^{S}$ such that  $P(0)=0$ and $P_1\leq P_2$ if and only if 
$P_1(a)\leq P_2(a)$ for all $a\in A$.  The smallest element of 
$(\text{Map}_0(\mathbf{A}, \mathbf{M}^{S}); \leq)$ 
is the constant mapping ${\mathbf 0}$ such that ${\mathbf 0}(a)=(0)_{s\in S}$ for all $a\in A$.
Let us put, for all $R\in \wp(S\times S)$ and all  $P\in \text{Map}_0(\mathbf{A}, \mathbf{M}^{S})$, 
$\Phi(R)=P_R$ and $\Psi(P)=R^{P}$.

The following is an immediate corollary of Theorem \ref{reldgalconnreprest} applied to relations 
on $S\times S$ and to the poset 
$(\text{Map}_1(\mathbf{A}^{op}, (\mathbf{M}^{op})^{S}); \sqsubseteq)$. To see it it is enough 
to note that $(\text{Map}_0(\mathbf{A}, \mathbf{M}^{S}); \leq)=%
(\text{Map}_1(\mathbf{A}^{op}, (\mathbf{M}^{op})^{S}); \sqsubseteq)$, 
$\Phi(R)=P_R=\phi(R^{-1})$ and $\Psi(P)=R^{P}=\psi(P)$.

\begin{theorem}\label{dualreldgalconn}
Let  $\mathbf{M}$ be a  non-trivial  complete lattice and $S$ a non-empty set 
such that  $\mathbf{A}$ is a bounded subposet of $\mathbf{M}^{S}$. Then the couple 
$(\Phi, \Psi)$ is a Galois connection between $(\wp(S\times S);$ $\subseteq, \emptyset, S\times S)$  
and $(\text{\upshape Map}_0(\mathbf{A}, \mathbf{M}^{S}); \leq)$.
\end{theorem}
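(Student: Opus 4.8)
The plan is to deduce Theorem~\ref{dualreldgalconn} from Theorem~\ref{reldgalconnreprest} by a straightforward dualization, exactly as the paragraph preceding the statement indicates; no fresh computation is really needed. First I would record the order-theoretic dictionary: for a bounded poset $\mathbf{A}=(A;\leq,0,1)$ the opposite poset $\mathbf{A}^{op}=(A;\geq,1,0)$ is again bounded, the bottom and top being swapped, and likewise $\mathbf{M}^{op}$ is a non-trivial complete lattice (joins and meets are interchanged). Since $\mathbf{A}$ is a bounded subposet of $\mathbf{M}^{S}$, $\mathbf{A}^{op}$ is a bounded subposet of $(\mathbf{M}^{S})^{op}=(\mathbf{M}^{op})^{S}$, so Theorem~\ref{reldgalconnreprest} applies verbatim with $\mathbf{B}$ replaced by $\mathbf{A}^{op}$ and $\mathbf{M}$ replaced by $\mathbf{M}^{op}$.

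Next I would verify the three identifications asserted in the excerpt. For the poset of mappings: an order-preserving $P\colon A\to M^{S}$ with $P(0)=0$ is literally the same data as an order-preserving $T\colon A^{op}\to (M^{op})^{S}$ with $T(1^{op})=1^{op}$, because reversing the order on both source and target does not change which maps are monotone, and the bottom $0$ of $\mathbf{A}$ is the top of $\mathbf{A}^{op}$ while $0$ of $\mathbf{M}$ is the top of $\mathbf{M}^{op}$. Moreover the order $\leq$ on $\text{Map}_0$ (pointwise $\leq$ in $\mathbf{M}^{S}$) agrees with the order $\sqsubseteq$ on $\text{Map}_1$ built from $\mathbf{M}^{op}$ (which is pointwise $\geq$ in $\mathbf{M}^{op}$, i.e.\ pointwise $\leq$ in $\mathbf{M}$). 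Hence $(\text{Map}_0(\mathbf{A},\mathbf{M}^{S});\leq)=(\text{Map}_1(\mathbf{A}^{op},(\mathbf{M}^{op})^{S});\sqsubseteq)$ as posets. For the operator maps: unwinding the definitions $(\ast)$ and $(\ast\ast)$, the meet $\bigwedge_{M^{op}}$ is the join $\bigvee_{M}$, so $\varphi$ computed in $\mathbf{M}^{op}$ for the relation $R^{-1}$ sends $a$ to $(t\mapsto \bigvee_{M}\{s(a)\mid t R^{-1} s\}) = (t\mapsto\bigvee_{M}\{s(a)\mid s R t\})=P_R(a)$, i.e.\ $\varphi(R^{-1})=P_R=\Phi(R)$. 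Similarly, comparing $(\dagger)$ and $(\dagger\dagger)$ one sees $\psi(P)$ computed for $\mathbf{M}^{op}$ is exactly $R^{P}=\Psi(P)$; here the inequality $s(T(b))\le t(b)$ in $\mathbf{M}^{op}$ reads $t(b)\le s(T(b))$ in $\mathbf{M}$, and after the role-swap of $s,t$ forced by passing to $R^{-1}$ this matches $s(a)\le t(P(a))$.

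Finally I would invoke the fact, stated just after Lemma~\ref{GalCon}, that a Galois connection composes with isomorphisms and is preserved under reversing all orders; concretely, $(\varphi,\psi)$ being a Galois connection between $(\wp(S\times S);\subseteq)$ and $(\text{Map}_1(\mathbf{A}^{op},(\mathbf{M}^{op})^{S});\sqsubseteq)$ together with the order-automorphism $R\mapsto R^{-1}$ of $(\wp(S\times S);\subseteq)$ yields that $(R\mapsto\varphi(R^{-1}),\,\psi)$ is a Galois connection between the same two posets, and by the identifications above this is precisely $(\Phi,\Psi)$ between $(\wp(S\times S);\subseteq,\emptyset,S\times S)$ and $(\text{Map}_0(\mathbf{A},\mathbf{M}^{S});\leq)$. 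I expect the only real point requiring care — the ``main obstacle'', though it is more bookkeeping than difficulty — to be checking that the reversal of orders on $\mathbf{M}$ and on the mapping posets is tracked consistently through the definitions $(\ast\ast)$ and $(\dagger\dagger)$, so that the join in $\mathbf{M}$ and the $R^{-1}$ really land one on top of the other; once the dictionary is pinned down, the conclusion is immediate.
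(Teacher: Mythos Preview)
Your proposal is correct and follows exactly the paper's own route: the theorem carries no separate proof there, only the preceding paragraph indicating the dualization to $\mathbf{A}^{op}$, $\mathbf{M}^{op}$ together with the identifications $(\text{Map}_0(\mathbf{A},\mathbf{M}^{S});\leq)=(\text{Map}_1(\mathbf{A}^{op},(\mathbf{M}^{op})^{S});\sqsubseteq)$, $\Phi(R)=\varphi(R^{-1})$ and $\Psi(P)=\psi(P)$, which you unpack in detail. One small bookkeeping point to tighten (the paper glosses over it too): in the dual setting one actually gets $\psi(P)=\{(s,t)\mid t(a)\le_{M} s(P(a))\ \text{for all }a\}=(R^{P})^{-1}$ rather than $R^{P}$, so the automorphism $R\mapsto R^{-1}$ must also be applied on the $\psi$-side, giving $(\Phi,\Psi)=\bigl(\varphi\circ(\cdot)^{-1},\,(\cdot)^{-1}\circ\psi\bigr)$; with that adjustment your final paragraph goes through verbatim.
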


The connection between relations induced by means of transition operators 
$T$ and $P$ is shown in the following lemma and theorem. Of 
course, we can identify the ordered sets $\mathbf{A}$ and $\mathbf{B}$ for 
to obtain the answer to our original task, i.e., for to obtain the relationship 
between transition operators and transition relations on a physical system ${\mathcal P}=(B,S)$.

 \begin{lemma}\label{xreldreprest}
Let  $\mathbf{M}$ be a  non-trivial  complete lattice and  $S$ a non-empty set 
such that  $\mathbf{A}$ and $\mathbf{B}$ are bounded subposets of\/ $\mathbf{M}^{S}$. 
 Let $P:A\to {M}^{S}$ and $T:B\to {M}^{S}$ 
be morphisms of posets such that, for all $a\in A$ and all $b\in B$, 
$$
P(a)\leq b\ \ekviv\ a\leq T(b).
$$
\begin{enumerate}[{\rm(a)}]
\item If $P(A)\subseteq B$ then $R_T\subseteq R^{P}$.
\item If $T(B)\subseteq A$ then $R^{P}\subseteq R_T$.
\item If $P(A)\subseteq B$ and $T(B)\subseteq A$ then $R_T= R^{P}$.
\end{enumerate}
\end{lemma}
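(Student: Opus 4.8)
The plan is to unwind the definitions of $R_T$ and $R^P$ and use the adjunction $P(a)\le b \ekviv a\le T(b)$ together with the hypotheses on the images $P(A)$ and $T(B)$. Recall that $(s,t)\in R_T$ means $s(T(b))\le t(b)$ for all $b\in B$, and $(s,t)\in R^P$ means $s(a)\le t(P(a))$ for all $a\in A$. The only subtlety is that the defining conditions quantify over different sets ($B$ versus $A$), so to move between them we need the images to land in the right place.

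For part (a), assume $P(A)\subseteq B$ and take $(s,t)\in R_T$. Fix $a\in A$; I want $s(a)\le t(P(a))$. Since $P$ is a morphism of posets and $t$ is the $t$-th projection restricted to $\mathbf B$, the element $P(a)$ lies in $B$ by hypothesis, so we may substitute $b:=P(a)$ into the condition defining $R_T$ to get $s(T(P(a)))\le t(P(a))$. It therefore suffices to show $s(a)\le s(T(P(a)))$, i.e. $a\le T(P(a))$ in $\mathbf M^S$; but this is exactly the unit inequality $\mathop{id}\le T\circ P$ coming from the Galois connection $(P,T)$ (apply the adjunction to $P(a)\le P(a)$). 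Hence $(s,t)\in R^P$, and $R_T\subseteq R^P$.

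For part (b), assume $T(B)\subseteq A$ and take $(s,t)\in R^P$. Fix $b\in B$; I want $s(T(b))\le t(b)$. Since $T(b)\in A$ by hypothesis, substitute $a:=T(b)$ into the condition defining $R^P$ to obtain $s(T(b))\le t(P(T(b)))$. It then suffices to show $t(P(T(b)))\le t(b)$, i.e. $P(T(b))\le b$, which is the counit inequality $P\circ T\le \mathop{id}$ of the Galois connection (apply the adjunction to $T(b)\le T(b)$). Hence $(s,t)\in R_T$, and $R^P\subseteq R_T$. Part (c) is then immediate by combining (a) and (b).

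I do not expect any real obstacle here; the argument is a routine application of the unit/counit inequalities of a Galois connection, with the image hypotheses $P(A)\subseteq B$ and $T(B)\subseteq A$ serving only to license the substitutions $b:=P(a)$ and $a:=T(b)$ into the respective defining conditions. The one point worth stating carefully is that $P$ and $T$ are a priori maps into $\mathbf M^S$, so the inclusions $P(A)\subseteq B$ and $T(B)\subseteq A$ (together with $\mathbf A,\mathbf B$ being subposets of $\mathbf M^S$) are precisely what is needed to regard $P$, restricted appropriately, as a map into $B$ and $T$ as a map into $A$, so that the adjunction can be chained with itself.
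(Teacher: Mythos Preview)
Your proof is correct and follows essentially the same approach as the paper: for (a) you set $b:=P(a)$, invoke the unit inequality $a\le T(P(a))$ of the adjunction, and chain inequalities exactly as the paper does; for (b) you spell out the dual argument via $a:=T(b)$ and the counit $P(T(b))\le b$, which the paper simply leaves as ``the same reasonings as in (a)''. There is nothing to add.
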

\begin{proof} (a): Let $s, t\in S$ and $(s,t)\in R_T$. % if and only if 
%$(\forall b\in B) (s(T(b))\leq t(b))$. 
Let $a\in A$. We put $b=P(a)$. It follows that  $a\leq T(P(a))$ and hence 
$s(a)\leq s(T(P(a))\leq t(P(a))$, i.e., $(s,t)\in R^{P}$ and we have 
$R_T\subseteq R^{P}$. 

(b): It follows from the same reasonings as in (a). 

(c): It follows immediately from (a) and (b).

\end{proof}

Among other things, the following theorem shows that if 
a given transition relation $R$ can be recovered by the upper transition operator 
then, under natural conditions, it can be recovered by the lower 
transition operator and vice versa.
 
 \begin{theorem}\label{reldreprest}
Let  $\mathbf{M}$ be a  non-trivial  complete lattice and 
$(S,R)$ a transition frame. Let $\mathbf{A}$ and $\mathbf{B}$ be  
bounded subposets of\/ $\mathbf{M}^{S}$.  Let $P_R:A\to {M}^{S}$ and $T_R:B\to {M}^{S}$ 
be operators  {constructed by means of the transition frame} $(S,R)$, i.e., the respective 
parts of the condition  {\rm($\star$)} hold for the relation $R$.
Then, for all $a\in A$ and all $b\in B$, 
$$
P_R(a)\leq b\ \ekviv\ a\leq T_R(b).
$$
Moreover,  the following holds.
\begin{enumerate}[\rm(a)] 
\item Let for all $t\in S$ exist an element $b^t\in B$ such that, for all $s\in S$, $(s,t)\notin R$, we have 
$\bigwedge_{M}\{u(b^{t})\mid  s R u\}\not\leq t(b^{t})\not =1$. Then 
$R=R_{T_R}$. 
\item Let for all $s\in S$ exist an element $a^s\in A$ such that, for all $t\in S$, $(s,t)\notin R$, we have 
$\bigvee_{M}\{u(a_{s})\mid  u R t\}\not\geq s(a^{s})\not =0$. Then 
 $R=R^{P_R}$. 
 \item If $R=R_{T_R}$ and $T_R(B)\subseteq A$ then $R=R_{T_R}=R^{P_R}$. 
 \item If $R=R^{P_R}$ and $P_R(A)\subseteq B$ then $R=R_{T_R}=R^{P_R}$.
\end{enumerate}
\end{theorem}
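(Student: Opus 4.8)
The plan is to derive everything from the displayed equivalence, which I would prove first, and then read off parts (a)--(d) by combining it with Lemmas~\ref{reldgalreprest} and~\ref{xreldreprest}.

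First I would prove $P_R(a)\le b\iff a\le T_R(b)$ by unfolding both sides along the projections $s(\cdot)\colon M^S\to M$. Since $\mathbf A$ and $\mathbf B$ are subposets of $\mathbf M^S$, the inequality $P_R(a)\le b$ holds iff $t(P_R(a))\le t(b)$ for every $t\in S$; by the defining formula $(\ast\ast)$ and the fact that a join lies below $m$ exactly when each of its members does, this is equivalent to the condition that $s(a)\le t(b)$ for all $(s,t)\in R$. Dually, $a\le T_R(b)$ holds iff $s(a)\le s(T_R(b))$ for every $s\in S$, and by $(\ast)$ together with the fact that $m$ lies below a meet exactly when it lies below each of its members, this is equivalent to the \emph{same} condition $s(a)\le t(b)$ for all $(s,t)\in R$. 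Hence $P_R(a)\le b\iff a\le T_R(b)$; in particular $P_R$ and $T_R$ satisfy the hypothesis of Lemma~\ref{xreldreprest}.

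For parts (a) and (b), Lemma~\ref{reldgalreprest}(d) already gives $R\subseteq R_{T_R}\cap R^{P_R}$, so in each case only the reverse inclusion needs an argument, which I would give by contraposition. For (a): let $(s,t)\notin R$ and take $b^t\in B$ as in the hypothesis. Applying the hypothesis to this particular $s$ yields $s(T_R(b^t))=\bigwedge_M\{u(b^t)\mid sRu\}\not\le t(b^t)$, and since $b^t\in B$ this exhibits, via ($\dagger$), that $(s,t)\notin R_{T_R}$; thus $R_{T_R}\subseteq R$ and $R=R_{T_R}$. The clause $t(b^t)\ne 1$ is precisely what covers the degenerate case $\{u\mid sRu\}=\emptyset$, where the meet equals the top of $\mathbf M$. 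Part (b) is strictly dual: use the witnesses $a^s\in A$, the formula $(\ast\ast)$, the definition ($\dagger\dagger$) of $R^{P_R}$, and the fact that $\bigvee_M\emptyset$ is the bottom of $\mathbf M$, so that the clause $s(a^s)\ne 0$ takes care of the empty index set $\{u\mid uRt\}$.

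Parts (c) and (d) then follow purely formally. For (c): by the equivalence just proved, $P_R,T_R$ satisfy the hypothesis of Lemma~\ref{xreldreprest}, so if $T_R(B)\subseteq A$ then part~(b) of that lemma gives $R^{P_R}\subseteq R_{T_R}$; combining this with $R\subseteq R^{P_R}$ (Lemma~\ref{reldgalreprest}(d)) and the assumption $R=R_{T_R}$ yields $R=R_{T_R}\supseteq R^{P_R}\supseteq R$, so all three relations coincide. Part (d) is symmetric, using Lemma~\ref{xreldreprest}(a) and the inclusion $R\subseteq R_{T_R}$ from Lemma~\ref{reldgalreprest}(d). I do not expect a genuine obstacle here --- the whole argument is adjunction bookkeeping --- the only points that require attention being the handling of empty index sets and of the top and bottom of $\mathbf M$ in (a)--(b), and verifying that the chosen witnesses $b^t,a^s$ really lie in $B$, resp.\ $A$, so as to be admissible test elements in ($\dagger$), resp.\ ($\dagger\dagger$).
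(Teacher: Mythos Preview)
Your proposal is correct and follows essentially the same route as the paper: the displayed adjunction is proved by unfolding both sides along the projections to the common condition $s(a)\le t(b)$ for all $(s,t)\in R$; parts (a) and (b) are obtained by contraposition using Lemma~\ref{reldgalreprest}(d) for the forward inclusion, with the clause $t(b^t)\ne 1$ (resp.\ $s(a^s)\ne 0$) absorbing the empty-index-set case exactly as in the paper's explicit case split; and parts (c) and (d) are the same sandwich argument via Lemma~\ref{xreldreprest}.
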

\begin{proof} Clearly,   for all $a\in A$ and all $b\in B$, 

$$
\begin{array}{r c l}
P_R(a)\leq b\ %\text{if and only if}\ \text{for all}\ t\in B \ t(P(a))\leq t(b)
&\ekviv&\ (\forall t\in S)( t(P_R(a))\leq t(b)) \\[0.05cm]
&\ekviv&\ (\forall s, t\in S)(sRt \implik s(a)\leq t(b)) \\[0.05cm]
&\ekviv&\ (\forall s\in S)(s(a)\leq s(T_R(b))) \ekviv a\leq T_R(b).
\end{array}
$$

%Assume  that $(s, t)\in R$. From the definition of $T_R$ by means of the 
%transition  frame $(S,R)$ 
%we know, for all $b\in B$, that $s(T_R(b))=\bigwedge_{M}\{t(b)\mid s R t\}$. Hence 
%$s(T_R(b))\leq t(b)$, i.e., $(s, t)\in R_{T_R}$ and we obtain $R\subseteq R_{T_R}$.  
We know from Lemma \ref{reldgalreprest}  (d) that $R\subseteq R_{T_R}$ and $R\subseteq R^{P_R}$.

It is enough to verify (a) and (c). The remaining parts (b) and (d) follow by dual considerations.

(a): Now, assume that $(s, t)\not\in R$.

Then either $\{u \in S \mid s R u\}=\emptyset$ or 
 $\{u \in S \mid s R u\}\not=\emptyset$. Suppose 
 $\{u \in S \mid s R u\}=\emptyset$. Then 
 $s(T_R(b^{t}))=\bigwedge_{M}\{u(b^{t})\mid  s R u\}=%
\bigwedge_{M} \emptyset=1\not\leq t(b^{t})\not =1$. 
 For the other case, we have  that 
$t\not\in \{u \in S \mid s R u\}$, i.e., $u\not= t$ and therefore 
 $s(T_R(b^{t}))=\bigwedge_{M}\{u(b^{t})\mid  s R u\}\not\leq t(b^{t})$. 
 Hence in both cases we get that $s(T_R(b^{t}))\not\leq t(b^{t})$, i.e., 
 $(s, t)\not\in R_{T_R}$. It follows that $R_{T_R}\subseteq R$, i.e., $R=R_{T_R}$. 
 
(c): Assume that $R=R_{T_R}$ and $T_R(B)\subseteq A$. From Lemma \ref{xreldreprest} we have that 
$R\subseteq R^{P_R}\subseteq R_{T_R}=R$ which yields the statement.
\end{proof}

The following two corollaries of Theorem \ref{reldreprest} show that if the 
set $B$ of propositions on the system $(B,S)$ is large enough, i.e., if it 
contains the full set  $\{0,1\}^S$  then the transition relation $R$ can be recovered by each of the transition operators.

 \begin{corollary}\label{fcorreldreprest}
Let  $\mathbf{M}$ be a  non-trivial  complete lattice and 
$(S,R)$ a transition frame. Let 
$\mathbf{B}$ be a bounded subposets of\/ $\mathbf{M}^{S}$ 
such that $\{0,1\}^{S}\subseteq B$.  Let $P_R:B\to {M}^{S}$ and $T_R:B\to {M}^{S}$ be 
operators  {constructed by means of the transition frame} $(S,R)$, i.e., the respective parts of the condition  
{\rm($\star$)} hold for the relation $R$.
Then $R=R^{P_R}=R_{G_R}$. 
\end{corollary}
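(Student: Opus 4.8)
The plan is to derive the corollary from Theorem \ref{reldreprest}, applied with the identification $\mathbf{A}=\mathbf{B}$. That theorem already tells us, once its hypotheses~(a) and~(b) are checked, that $R=R_{T_R}$ and $R=R^{P_R}$; so the only work is to produce, for each $t\in S$, a witness $b^{t}\in B$ as required in~(a), and dually, for each $s\in S$, a witness $a^{s}\in B$ as required in~(b). Since $\{0,1\}^{S}\subseteq B$ by assumption, these witnesses are available as the obvious two-valued functions on $S$, with values the bottom $0$ and the top $1$ of $\mathbf{M}$.

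For the hypothesis of Theorem \ref{reldreprest}(a): given $t\in S$, I would take $b^{t}\in\{0,1\}^{S}\subseteq B$ defined by $b^{t}(t)=0$ and $b^{t}(u)=1$ for $u\neq t$. Then $t(b^{t})=0\neq 1$ because $\mathbf{M}$ is non-trivial. If $s\in S$ satisfies $(s,t)\notin R$, then $t\notin\{u\in S\mid sRu\}$, so $u(b^{t})=1$ for every $u$ in that set; hence $\bigwedge_{M}\{u(b^{t})\mid sRu\}=1$, irrespective of whether $\{u\mid sRu\}$ is empty (the empty meet in $\mathbf{M}$ being $1$) or not. Since $1\not\leq 0$, the inequality $\bigwedge_{M}\{u(b^{t})\mid sRu\}\not\leq t(b^{t})\neq 1$ demanded in~(a) holds, and Theorem \ref{reldreprest}(a) yields $R=R_{T_R}$.

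Dually, for the hypothesis of Theorem \ref{reldreprest}(b): given $s\in S$, I would take $a^{s}\in\{0,1\}^{S}\subseteq B$ with $a^{s}(s)=1$ and $a^{s}(u)=0$ for $u\neq s$, so that $s(a^{s})=1\neq 0$. If $(s,t)\notin R$, then $s\notin\{u\in S\mid uRt\}$, hence $u(a^{s})=0$ for all such $u$ and $\bigvee_{M}\{u(a^{s})\mid uRt\}=0$ (again using that the empty join in $\mathbf{M}$ is $0$). Since $0\not\geq 1$, the requirement $\bigvee_{M}\{u(a^{s})\mid uRt\}\not\geq s(a^{s})\neq 0$ holds, and Theorem \ref{reldreprest}(b) gives $R=R^{P_R}$. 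Combining the two, $R=R^{P_R}=R_{T_R}$, which is the assertion (the symbol $R_{G_R}$ in the statement being a misprint for $R_{T_R}$).

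No serious obstacle is expected: the argument is pure bookkeeping once the right witnesses are chosen. The two points to keep in mind are that the meets and joins over the empty index set must be evaluated in $\mathbf{M}$, so that $\bigwedge_{M}\emptyset=1$ and $\bigvee_{M}\emptyset=0$, and that non-triviality of $\mathbf{M}$, i.e.\ $0\neq 1$, is precisely what turns the separating inequalities into strict ones. Note also that parts~(c) and~(d) of Theorem \ref{reldreprest} are not used, since $P_R$ and $T_R$ need not map $B$ into $B$; the separation hypotheses~(a) and~(b) are verified directly.
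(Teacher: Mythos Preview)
Your proof is correct and follows exactly the same approach as the paper: you choose the same two-valued witnesses $b^{t}$ (and the dual $a^{s}$) in $\{0,1\}^{S}\subseteq B$ to verify hypotheses (a) and (b) of Theorem~\ref{reldreprest}. The paper's proof is terser---it writes out only the $b^{t}$ case and dismisses the dual with ``Similarly''---while you spell out both directions and the empty-meet/join and non-triviality details explicitly, but the argument is the same.
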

\begin{proof}Let us we define, for all $t\in S$, an element $b^{t}\in {2}^{T}\subseteq B$ 
by 

$${u}(b^{t})=\begin{cases}0&\text{\upshape if}\  u=t,\\
                                                1&\text{\upshape if}\  u\not=t.
                                                \end{cases}$$
Then $b^{t}\in  B$ satisfies the assumption of part (a) from Theorem \ref{reldreprest}, i.e., 
$R=R_{T_R}$. Similarly, $R= R^{P_R}$.
\end{proof}
 
\begin{corollary}\label{correldreprest}
Let  $\mathbf{M}$ be a  non-trivial  complete lattice and 
$(S,R)$ a transition frame.   
Let   $\widehat{T}, \widehat{P}:{M}^S \to {M}^S$  be operators constructed by means of $(S,R)$. 
Then $R=R^{\widehat{P}}=R_{\widehat{T}}$.
\end{corollary}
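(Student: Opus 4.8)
The plan is to derive Corollary \ref{correldreprest} as the special case of Corollary \ref{fcorreldreprest} obtained by taking $\mathbf{B}=\mathbf{M}^{S}$ itself, with $\mathbf{A}=\mathbf{M}^{S}$ as well. First I would observe that $\mathbf{M}^{S}$ is a bounded subposet of $\mathbf{M}^{S}$ in the trivial way, and that since $\mathbf{M}$ is a non-trivial complete lattice we have $\{0,1\}\subseteq M$, hence $\{0,1\}^{S}\subseteq M^{S}=B$. Thus the hypothesis $\{0,1\}^{S}\subseteq B$ of Corollary \ref{fcorreldreprest} is automatically satisfied. It remains only to check that the operators $\widehat{T},\widehat{P}\colon M^{S}\to M^{S}$ constructed by means of $(S,R)$ via $(\ast)$ and $(\ast\ast)$ do satisfy the respective parts of condition $(\star)$ for the relation $R$, so that Corollary \ref{fcorreldreprest} (with $\mathbf{A}=\mathbf{B}=\mathbf{M}^{S}$) applies and gives $R=R^{\widehat{P}}=R_{\widehat{T}}$.

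The key step is therefore this last verification. I would argue that for an operator defined directly by the formulas $(\ast)$ and $(\ast\ast)$ out of the frame $(S,R)$, condition $(\star)$ holds with $R_G$ and $R^{P}$ replaced by $R$ itself: indeed $\widehat{T}(b)(s)=\bigwedge_{M}\{t(b)\mid sRt\}$ is literally the right-hand side of $(\star)(1)$ when the induced relation there is $R$, and dually for $\widehat{P}$. The only subtlety is matching the bookkeeping in the definition of "operator constructed by means of the transition frame": the phrase "i.e., the respective parts of condition $(\star)$ hold for the relation $R$" is exactly the statement that $\widehat{T}=T_R$ and $\widehat{P}=P_R$ as given by $(\ast)$, $(\ast\ast)$, which is immediate from the definitions. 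So I would simply note that $\widehat{T}=T_R$ and $\widehat{P}=P_R$, that $\mathbf{B}=\mathbf{M}^{S}\supseteq\{0,1\}^{S}$, and invoke Corollary \ref{fcorreldreprest}.

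I expect the main (and only) obstacle to be purely a matter of citing the right ambient poset: one must be careful that Corollary \ref{fcorreldreprest} is stated with a single poset $\mathbf{B}$ serving as both domain of $P_R$ and of $T_R$, which is fine here since $\mathbf{A}=\mathbf{B}=\mathbf{M}^{S}$, and that non-triviality of $\mathbf{M}$ is genuinely used to guarantee $0\neq 1$ in $M$ so that $\{0,1\}^{S}$ is a two-element-fibred subposet as needed in the proof of Corollary \ref{fcorreldreprest}. Hence the proof is one or two lines.

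\begin{proof}
Since $\mathbf{M}$ is non-trivial, $0\neq 1$ in $M$, so $\{0,1\}^{S}\subseteq M^{S}$. Taking $\mathbf{A}=\mathbf{B}=\mathbf{M}^{S}$, which is trivially a bounded subposet of $\mathbf{M}^{S}$ containing $\{0,1\}^{S}$, the operators $\widehat{T}=T_R$ and $\widehat{P}=P_R$ are constructed by means of $(S,R)$ in the sense required. Corollary \ref{fcorreldreprest} now yields $R=R^{\widehat{P}}=R_{\widehat{T}}$.
\end{proof}
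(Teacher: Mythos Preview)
Your proposal is correct and matches the paper's own treatment: the paper states Corollary~\ref{correldreprest} without proof, immediately after Corollary~\ref{fcorreldreprest}, intending it as the special case $\mathbf{A}=\mathbf{B}=\mathbf{M}^{S}$, which automatically contains $\{0,1\}^{S}$ since $\mathbf{M}$ is non-trivial. Your one-line derivation via Corollary~\ref{fcorreldreprest} is exactly the intended argument.
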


We can illustrate previous results in the following two examples.

\begin{example}\label{example2}\label{ex2} Consider the system ${\mathcal P}=(B,S)$ 
of Example \ref{firef}.

From the  table we see the values of $B$ as follows:

\medskip

\begin{center}
\begin{tabular}{l l}
$0=(0,0,0,0,0)$,& $l=(1, 1,0, 0, 0)$,\\ 
$r=(0, 0, 1, 1, 0)$, &$n=(0, 0, 0, 0, 1)$, \\
$f=(1, 0, 0, 1, 0)$,& $b=(0,1, 1, 0, 0)$,\\ 
$l'=(0, 0, 1, 1, 1)$,& $r'=(1, 1, 0, 0, 1)$, \\
$n'=(1, 1, 1, 1, 0)$,& $f'=(0, 1, 1, 0, 1)$, \\
$b'=(1, 0, 0,$ $1, 1)$,& $1=(1,1,1,1,1)$.
\end{tabular}
\end{center}

\medskip

Using our formulas $(\ast)$ and $(\ast\ast)$, we compute the upper transition operator 
$T_R\colon B\to 2^{S}$ and the lower transition operator 
$P_R\colon B\to 2^{S}$ as follows: 

\medskip
\begin{center}
\begin{tabular}{c}
\begin{tabular}{l l}
$T_{R}(0)=0$,&$T_{R}(1)=1$,\\
 $T_{R}(l)=(1,0,0,0,0)$,& $T_{R}(l')=(0,0,0,1,1)$,\\
$T_{R}(r)=(0,1,0,0,0)$,& $T_{R}(r')=(1,0,1,0,1)$,\\
$T_{R}(n)=n$,&$T_{R}(n')=l$,\\
$T_{R}(f)=0$,&$T_{R}(f')=1$,\\
$T_{R}(b)=l$,& $T_{R}(b')=n$,\\
\end{tabular}\\
%\phantom{\Huge R}\\
\begin{tabular}{l l}
\phantom{$T_{R}(l')=(0,0,0,1,1)$,}&\phantom{$T_{R}(l')=(0,0,0,1,1)$,}\\
$P_{R}(0)=0$,&$P_{R}(1)=f'$,\\
 $P_{R}(l)=b$,& $P_{R}(l')=f'$,\\
$P_{R}(r)=f'$,& $P_{R}(r')=f'$,\\
$P_{R}(n)=n$,&$P_{R}(n')=f'$,\\
$P_{R}(f)=f'$,&$P_{R}(f')=f'$,\\
$P_{R}(b)=f'$,& $P_{R}(b')=f'$.\\
\end{tabular}
\end{tabular}
\end{center}
\medskip 

Hence, $T_{R}(l), T_{R}(r), T_{R}(l')$ and $T_{R}(r')$ do not belong to $B$, they only belong to $2^{S}$. 
On the contrary, $P_R(B)\subseteq B$.

Now, we use $T_R$ for computing the binary relation $R_{T_R}$ (by the formula $(\dagger)$) 
and  $P_R$ for computing the binary relation $R^{P_R}$ (by the formula $(\dagger\dagger)$). 
We obtain that $R_{T_R}=R$, i.e., our given relation $R$ is recoverable by the transition operator $T_R$ 
and hence this operator is a characteristic of the system ${\mathcal P}=(B,S)$.

On the contrary, $R^{P_R}$ is given as follows 

$$
\begin{array}{r c@{\,}l}
R^{P_R}&=&\{(s_1,s_2),(s_1,s_3), (s_2, s_2), (s_2, s_3), (s_3,s_2), (s_3,s_3), (s_3,s_5),\\
& &(s_4,s_2),  (s_4,s_3), (s_4,s_5), (s_5,s_5)\}. 
\end{array}
$$

It can be vizualized by the following transition graph.

\tikzset{
my loop/.style={to path={
.. controls +(80:1) and +(100:1) .. (\tikztotarget) \tikztonodes}},
my state/.style={circle,draw}}

 \begin{figure}[h]
\centering
\begin{tikzpicture}[shorten >=1pt,node distance=2cm,auto]
%\draw[help lines] (0,0) grid (3,2);
\node[my state] (s_1)  at (0,0) {$s_1$};
\node[my state] (s_2) [right of=s_1] {$s_2$};
\node[my state] (s_3) [right of=s_2] {$s_3$};
\node[my state] (s_4) [below of=s_3] {$s_4$};
\node[my state] (s_5) [right of=s_3] {$s_5$};
\draw [->] (s_1) -- (s_2);
\draw [->] (s_2) -- (s_3);
\draw [->] (s_3) -- (s_5);
\draw [->] (s_4) -- (s_3);
\draw [->] (s_4) -- (s_2);
\draw [->] (s_4) -- (s_5);
\draw[->] (s_3) .. controls  ++(-0.5,-0.5)  ..   (s_2);
\draw[->] (s_1) .. controls  ++(1.5,1.5)  ..   (s_3);
\path (s_2) edge [in=230,out=270,loop]   (s_2);
%\draw[->] (s_5) .. controls +(up:1cm)  ..   (s_5)
\path (s_5) edge [loop above]   (s_5);
\path (s_3) edge [loop above]   (s_3);
%\path[->] (s_1) edge node  (s_2); 
	%	  (s_2) edge node  (s_3) 
          %     (s_3) edge node  (s_5)
		%  (s_3) edge [loop above] node ()
              % (s_4) edge node  (s_3)
		  %(s_4) edge node  (s_5);
\end{tikzpicture}
\caption{The transition graph of $R^{P_R}$}\label{Fig2xtr}
\end{figure}
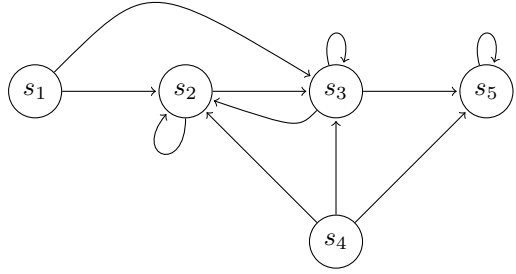  

One can easily see that $R\subseteq R^{P_{R}}$ but 
$R=R_{T_R}\not= R^{P_{R}}$ in accordance with our Theorem \ref{reldreprest} because 
the condition (b) from this theorem is not satisfied.

We can now use the same system ${\mathcal P}=(B,S)$  as above and, 
instead of the relation $R$ on $S$, we apply the new relation 
$R_2=R^{P_{R}}$. In other words, our system ${\mathcal P}=(B,S)$ will have the 
transition frame $(S, R^{P_{R}})$. After the computation of the new 
lower transition operator $P_{R_2}\colon B\to 2^{S}$ and the relation $R^{P_{R_2}}$ we see that 
now $R_2=R^{P_{R_2}}$.

Let us compute the new 
upper transition operator $T_{R_2}\colon B\to 2^{S}$ and the relation $R_{T_{R_2}}$. 
It follows that 
\medskip
\begin{center}
\begin{tabular}{c}
\begin{tabular}{l l}
$T_{R_2}(0)=0$,&$T_{R_2}(1)=1$,\\
 $T_{R_2}(l)=0$,& $T_{R_2}(l')=n$,\\
$T_{R_2}(r)=0$,& $T_{R_2}(r')=n$,\\
$T_{R_2}(n)=n$,&$T_{R_2}(n')=l$,\\
$T_{R_2}(f)=0$,&$T_{R_2}(f')=1$,\\
$T_{R_2}(b)=l$,& $T_{R_2}(b')=n$.\\
\end{tabular}
\end{tabular}
\end{center}
\medskip 
Since $T_{R_2}(B)\subseteq B$ and $P_{R_2}(B)\subseteq B$ we obtain from 
Theorem \ref{reldreprest} and Lemma \ref{xreldreprest} (c) 
that  $R_{T_{R_2}}=R^{P_{R_2}}=R_2$, i.e., 
$R_2$ is recoverable both by the upper transition operator $T_{R_2}$ and 
by the lower transition operator $P^{R_2}$.
\end{example} 

\begin{example} \label{ex3}Consider again  the system ${\mathcal P}=(B,S)$ 
of Example \ref{firef} but our relation $R$ will be now as follows:
$$
R=\{(s_1,s_2), (s_2, s_3), (s_3,s_2),  (s_3,s_4),  (s_4,s_3), (s_4,s_5), (s_5,s_3), (s_5,s_5)\}. 
$$

It can be visualized as in Figure \ref{Fig3tr}.

\tikzset{
my loop/.style={to path={
.. controls +(80:1) and +(100:1) .. (\tikztotarget) \tikztonodes}},
my state/.style={circle,draw}}

 \begin{figure}[h]
\centering
\begin{tikzpicture}[shorten >=1pt,node distance=2cm,auto]
%\draw[help lines] (0,0) grid (3,2);
\node[my state] (s_1)  at (0,0) {$s_1$};
\node[my state] (s_2) [right of=s_1] {$s_2$};
\node[my state] (s_3) [right of=s_2] {$s_3$};
\node[my state] (s_4) [below of=s_3] {$s_4$};
\node[my state] (s_5) [right of=s_3] {$s_5$};
\draw [->] (s_1) -- (s_2);
\draw [->] (s_2) -- (s_3);
%\draw [->] (s_3) -- (s_5);
\draw [->] (s_4) -- (s_3);
\draw [->] (s_4) -- (s_5);
\draw[->] (s_3) .. controls +(up:0.65cm)  ..   (s_2);
\draw[->] (s_5) --  (s_3);
\draw[->] (s_3) .. controls +(down:0.305cm) and (left:-0.103cm)  ..   (s_4);
%\draw[->] (s_5) .. controls +(up:1cm)  ..   (s_5)
\path (s_5) edge [loop right]   (s_5);
%\path[->] (s_1) edge node  (s_2); 
	%	  (s_2) edge node  (s_3) 
          %     (s_3) edge node  (s_5)
		%  (s_3) edge [loop above] node ()
              % (s_4) edge node  (s_3)
		  %(s_4) edge node  (s_5);
\end{tikzpicture}
\caption{The transition graph of $R$}\label{Fig3tr}
\end{figure}
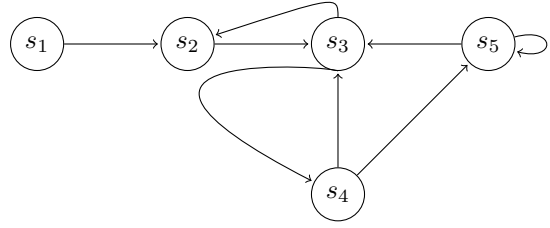  

For this relation the upper transition operator $T_R\colon B\to 2^{S}$ is as follows:

\medskip
\begin{center}
\begin{tabular}{l l}
$T_{R}(0)=0$,&$T_{R}(1)=1$,\\
 $T_{R}(l)=(1,0,0,0,0)$,& $T_{R}(l')=(0,0,0,1,1)$,\\
$T_{R}(r)=(0,1,0,0,0)$,& $T_{R}(r')=(1,0,0,0,0)$,\\
$T_{R}(n)=0$,&$T_{R}(n')=(1,1,1,0,0)$,\\
$T_{R}(f)=0$,&$T_{R}(f')=(1,1,0,1,1)$,\\
$T_{R}(b)=l$,& $T_{R}(b')=0$.\\
\end{tabular}
\end{center}
\medskip 

This upper transition operator $T_R$ differs from that of Example \ref{ex2} 
in values $T_{R}(n)$, $T_{R}(r')$, $T_{R}(n')$, $T_{R}(f')$ and 
$T_{R}(b')$.

Similarly, 
the lower transition operator $P_R\colon B\to 2^{S}$ is as follows:

\medskip
\begin{center}
\begin{tabular}{l l}
$P_{R}(0)=0$,&$P_{R}(1)=(0,1,1,1,1)$,\\
 $P_{R}(l)=b$,& $P_{R}(l')=(0,1,1,1,1)$,\\
$P_{R}(r)=(0,1,1,1,1)$,& $P_{R}(r')=f'$,\\
$P_{R}(n)=(0,0,1,0,1)$,&$P_{R}(n')=(0,1,1,1,1)$,\\
$P_{R}(f)=f'$,&$P_{R}(f')=(0,1,1,1,1)$,\\
$P_{R}(b)=(0,1,1,1,0)$,& $T_{R}(b')=f'$.\\
\end{tabular}
\end{center}
\medskip 

Now we compute  the  induced relation $R_{T_{R}}$, it is 
$$
R_{T_{R}}=\{(s_1,s_2), (s_2, s_3),  (s_3,s_1),   (s_3,s_2),  (s_3,s_3),   (s_3,s_4),  %
(s_4,s_3), (s_4,s_5), (s_5,s_3), (s_5,s_5)\}. 
$$

\tikzset{
my loop/.style={to path={
.. controls +(80:1) and +(100:1) .. (\tikztotarget) \tikztonodes}},
my state/.style={circle,draw}}

 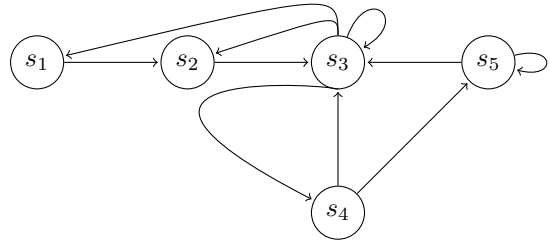
\begin{figure}[h]
\centering
\begin{tikzpicture}[shorten >=1pt,node distance=2cm,auto]
%\draw[help lines] (0,0) grid (3,2);
\node[my state] (s_1)  at (0,0) {$s_1$};
\node[my state] (s_2) [right of=s_1] {$s_2$};
\node[my state] (s_3) [right of=s_2] {$s_3$};
\node[my state] (s_4) [below of=s_3] {$s_4$};
\node[my state] (s_5) [right of=s_3] {$s_5$};
\draw [->] (s_1) -- (s_2);
\draw [->] (s_2) -- (s_3);
%\draw [->] (s_3) -- (s_5);
\draw [->] (s_4) -- (s_3);
\draw [->] (s_4) -- (s_5);
\draw[->] (s_3) .. controls +(up:0.65cm)  ..   (s_2);
\draw[->] (s_3) .. controls +(up:0.95cm)  ..   (s_1);
\draw[->] (s_5) -- (s_3);
\draw[->] (s_3) .. controls +(down:0.305cm) and (left:-0.103cm)  ..   (s_4);
%\draw[->] (s_5) .. controls +(up:1cm)  ..   (s_5)
\path (s_5) edge [loop right]   (s_5);
\path (s_3) edge [in=30,out=70,loop]   (s_3);
%\path[->] (s_1) edge node  (s_2); 
	%	  (s_2) edge node  (s_3) 
          %     (s_3) edge node  (s_5)
		%  (s_3) edge [loop above] node ()
              % (s_4) edge node  (s_3)
		  %(s_4) edge node  (s_5);
\end{tikzpicture}
\caption{The transition graph of $R_{T_R}$}\label{Fig4tr}
\end{figure}

One can easily see that $R\subseteq R_{T_{R}}$ but 
$R\not= R_{T_{R}}$ in accordance with our Theorem \ref{reldreprest} because 
the condition (a) from this theorem is not satisfied.

Similarly, we obtain the  induced relation $R^{P_{R}}$, it is 
$$
\begin{array}{r c@{\,}l}
R^{P_{R}}&=\{&(s_1,s_2), (s_1,s_3), (s_2, s_2), (s_2, s_3),  (s_3,s_2),  (s_3,s_3),   (s_3,s_4), \\ %
&&(s_4,s_2), (s_4,s_3), (s_4,s_5), (s_5,s_3), (s_5,s_5)\}.
\end{array} 
$$

\tikzset{
my loop/.style={to path={
.. controls +(80:1) and +(100:1) .. (\tikztotarget) \tikztonodes}},
my state/.style={circle,draw}}

 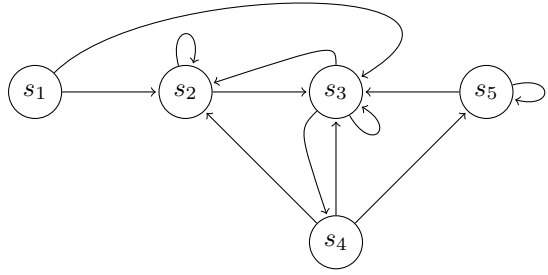
\begin{figure}[h]
\centering
\begin{tikzpicture}[shorten >=1pt,node distance=2cm,auto]
%\draw[help lines] (0,0) grid (3,2);
\node[my state] (s_1)  at (0,0) {$s_1$};
\node[my state] (s_2) [right of=s_1] {$s_2$};
\node[my state] (s_3) [right of=s_2] {$s_3$};
\node[my state] (s_4) [below of=s_3] {$s_4$};
\node[my state] (s_5) [right of=s_3] {$s_5$};
\draw [->] (s_1) -- (s_2);
\draw[->] (s_1) .. controls ++(1.5,1.5) and ++(2.5,1.5)  ..   (s_3);
\path (s_2) edge [loop above]   (s_2);
\draw [->] (s_2) -- (s_3);
\draw [->] (s_4) -- (s_2);
\path (s_3) edge [in=330,out=300,loop]   (s_3);
\draw [->] (s_4) -- (s_3);
\draw [->] (s_4) -- (s_5);
\draw[->] (s_3) .. controls +(up:0.65cm)  ..   (s_2);
\draw[->] (s_5) --  (s_3);
\draw[->] (s_3) .. controls ++(-0.5,-0.5)  ..   (s_4);
%\draw[->] (s_5) .. controls +(up:1cm)  ..   (s_5)
\path (s_5) edge [loop right]   (s_5);
%\path[->] (s_1) edge node  (s_2); 
	%	  (s_2) edge node  (s_3) 
          %     (s_3) edge node  (s_5)
		%  (s_3) edge [loop above] node ()
              % (s_4) edge node  (s_3)
		  %(s_4) edge node  (s_5);
\end{tikzpicture}
\caption{The transition graph of $R^{P_R}$}\label{Fig3trpr}
\end{figure}

We have $R\subseteq R^{P_{R}}$,  
$R\not= R^{P_{R}}$ and $R^{P_{R}}\not=R_{T_{R}}$, 
again in accordance with our Theorem \ref{reldreprest} because 
the condition (c) from this theorem is not satisfied.

However, we can now use the same system ${\mathcal P}=(B,S)$  as above and, 
instead of the relation $R$ on $S$, we apply the new relation 
$R_3=R_{T_{R}}$. In other words, our system ${\mathcal P}=(B,S)$ will have the 
transition frame $(S, R_{T_{R}})$. After the computation of the new 
upper transition operator $T_{R_3}\colon B\to 2^{S}$ and the relation $R_{T_{R_3}}$ we see that 
now $T_{R_3}=T_{R}$ and $R_3=R_{T_{R_3}}$. Let us compute 
 the new lower  transition operator $P_{R_3}\colon B\to 2^{S}$ and 
the new transition relation $R^{P_{R_3}}$. We have that  
\medskip
\begin{center}
\begin{tabular}{l l}
$P_{R_3}(0)=0$,&$P_{R_3}(1)=1$,\\
 $P_{R_3}(l)=b$,& $P_{R_3}(l')=1$,\\
$P_{R_3}(r)=1$,& $P_{R_3}(r')=f'$,\\
$P_{R_3}(n)=(0,0,1,0,1)$,&$P_{R_3}(n')=1$,\\
$P_{R_3}(f)=f'$,&$P_{R_3}(f')=1$,\\
$P_{R_3}(b)=n'$,& $T_{R_3}(b')=f'$.\\
\end{tabular}
\end{center}
\medskip 
Then $R^{P_{R_3}}=R_3\cup\{(s_1,s_3), (s_2,s_2), (s_4,s_2)\}$.

Similarly, applying the new relation $R_4=R^{P_{R}}$ we obtain 
the transition frame $(S, R^{P_{R}})$ for our system ${\mathcal P}=(B,S)$. 
Then we have that $R_4=R^{P_{R_4}}$. On the contrary, 
after the computation of  the new upper transition operator $T_{R_4}\colon B\to 2^{S}$ 
(see below) we have that $T_{R_4}\not =T_{R}$, $T_{R_4}\leq T_{R}$.
Moreover,  $R_4\not =R_{T_{R_4}}$ and $R_{T_{R_4}}=R_4\cup \{(s_3,s_1)\}$. 

\medskip
\begin{center}
\begin{tabular}{l l}
$T_{R_4}(0)=0$,&$T_{R_4}(1)=1$,\\
 $T_{R_4}(l)=0$,& $T_{R_4}(l')=0$,\\
$T_{R_4}(r)=0$,& $T_{R_4}(r')=0$,\\
$T_{R_4}(n)=0$,&$T_{R_4}(n')=(1,1,1,0,0)$,\\
$T_{R_4}(f)=0$,&$T_{R_4}(f')=(1,1,0,1,1)$,\\
$T_{R_4}(b)=l$,& $T_{R_4}(b')=0$.\\
\end{tabular}
\end{center}
\medskip 

\tikzset{
my loop/.style={to path={
.. controls +(80:1) and +(100:1) .. (\tikztotarget) \tikztonodes}},
my state/.style={circle,draw}}

 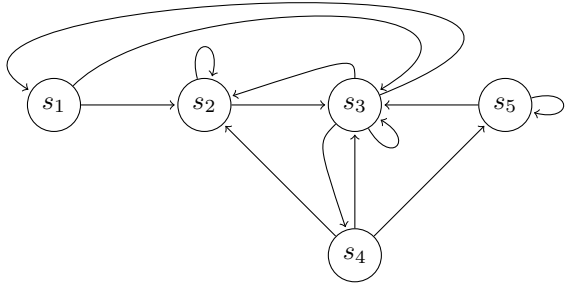
\begin{figure}[h]
\centering
\begin{tikzpicture}[shorten >=1pt,node distance=2cm,auto]
%\draw[help lines] (0,0) grid (3,2);
\node[my state] (s_1)  at (0,0) {$s_1$};
\node[my state] (s_2) [right of=s_1] {$s_2$};
\node[my state] (s_3) [right of=s_2] {$s_3$};
\node[my state] (s_4) [below of=s_3] {$s_4$};
\node[my state] (s_5) [right of=s_3] {$s_5$};
\draw [->] (s_1) -- (s_2);
\draw[->] (s_1) .. controls ++(1.5,1.5) and ++(2.5,1.5)  ..   (s_3);
\path (s_2) edge [loop above]   (s_2);
\draw [->] (s_2) -- (s_3);
\draw [->] (s_4) -- (s_2);
\path (s_3) edge [in=330,out=300,loop]   (s_3);
\draw [->] (s_4) -- (s_3);
\draw [->] (s_4) -- (s_5);
\draw[->] (s_3) .. controls +(up:0.65cm)  ..   (s_2);
\draw[->] (s_5) --  (s_3);
\draw[->] (s_3) .. controls ++(-0.5,-0.5)  ..   (s_4);
\draw[->] (s_3) .. controls ++(4.98,2.0)  and ++(-2.5,1.5) ..   (s_1);
%\draw[->] (s_5) .. controls +(up:1cm)  ..   (s_5)
\path (s_5) edge [loop right]   (s_5);
%\path[->] (s_1) edge node  (s_2); 
	%	  (s_2) edge node  (s_3) 
          %     (s_3) edge node  (s_5)
		%  (s_3) edge [loop above] node ()
              % (s_4) edge node  (s_3)
		  %(s_4) edge node  (s_5);
\end{tikzpicture}
\caption{The transition graph of $R^{P_{R_3}}=R_{T_{R_4}}=R^{P_{R_5}}$}\label{Fig3trpr}
\end{figure}

Put now $R_5=R_{T_{R_4}}$. Then $T_{R_5}=T_{R_4}$ and $R_5=R_{T_{R_5}}$ 
when computing the new upper transition operator $T_{R_5}\colon B\to 2^{S}$ and 
the new transition relation $R_{T_{R_5}}$. Let us compute the respective 
lower  transition operator $P_{R_5}\colon B\to 2^{S}$ and 
the new transition relation $R^{P_{R_5}}$. We obtain that 
\medskip
\begin{center}
\begin{tabular}{l l}
$P_{R_5}(0)=0$,&$P_{R_5}(1)=1$,\\
 $P_{R_5}(l)=b$,& $P_{R_5}(l')=1$,\\
$P_{R_5}(r)=1$,& $P_{R_5}(r')=f'$,\\
$P_{R_5}(n)=(0,0,1,0,1)$,&$P_{R_5}(n')=1$,\\
$P_{R_5}(f)=f'$,&$P_{R_5}(f')=1$,\\
$P_{R_5}(b)=n'$,& $T_{R_5}(b')=f'$.\\
\end{tabular}
\end{center}
\medskip 
It follows that $P_{R_3}=P_{R_5}$ and 
$R^{P_{R_5}}=R_5=R_{T_{R_4}}=R_{T_{R_5}}=R^{P_{R_3}}$, i.e., our construction stops now.
\end{example}

%
%\subsection*{Acknowledgment} We thank the anonymous referees  for 
%the careful reading of  the paper and the suggestions on improving its presentation.


\begin{thebibliography}{99}

\bibitem{Blyth} {BLYTH, T.S.:}  \textit{Lattices and ordered algebraic structures}, 
Springer-Verlag London Limited, 2005.

  \bibitem{burges}
  BURGES, J.:  \textit{Basic tense logic}.
  In: Handbook of Philosophical Logic, vol. II
  (D. M. Gabbay, F. G\"{u}nther, eds.),
  D. Reidel Publ. Comp., 1984, pp.~89--139.



\bibitem{dyn} CHAJDA, I.---PASEKA, J.: \textit{Dynamic Effect Algebras and their Representations}, Soft Computing \textbf{16}, (2012), 1733--1741.


 

\bibitem{dem} CHAJDA, I.--- PASEKA, J.:  \textit{Tense Operators and Dynamic De Morgan Algebras},  
 In: Proc. 2013 IEEE 43rd Internat. Symp. Multiple-Valued Logic,  
Springer,  (2013), 219--224.

\bibitem{doa} CHAJDA, I.--- PASEKA, J.: {\it Dynamic Order Algebras as an Axiomatization  of Modal and Tense Logics}, 
{International Journal of Theoretical Physics}, 
doi:  10.1007/s10773-015-2510-9. 

% \bibitem{dynpos} CHAJDA, I.--- PASEKA, J.: \textit{Dynamic Algebras as an %Axiomatization of Modal and Tense Logics}, preprint, 2012.

 % 



  \bibitem{foulis}
  FOULIS, D. J.---BENNETT, M. K.:
  \textit{Effect algebras and unsharp quantum logics},
  Found. Phys. \textbf{24}, (1994), 1325--1346. 

  \bibitem{foulisexam}  FOULIS, D. J.:
\textit{A half-century of  quantum  logic what have we learned?}, 
In %The Indigo Book of.  
 Proc. Conf. Einstein Meets Magritte, Brussels,
June, 1995.

\bibitem{hilbert} HILBERT,  D. : 
\textit{Mathematical Problems}, 
Bull. Amer. Math. Soc. \textbf{8} (1902), 437--479.


%\bibitem{pulmann} PULMANNOV\' A{},  S.---PT\' A{}K,  P.:  \textit{Orthomodular  Structures  as  Quantum  Logics}, 
%Kluwer  Academic  Publ.,  Dordrecht,  1991. 



\end{thebibliography}
\end{document}